\newtheorem{thm}{Theorem}[section]
\newtheorem{cor}[thm]{Corollary}
\newtheorem{lem}[thm]{Lemma} 
\newtheorem{exam}[thm]{Example}
\newtheorem{prop}[thm]{Proposition}
\newtheorem{rem}[thm]{Remark}
\theoremstyle{definition}
\numberwithin{equation}{section}
\newcommand{\R}{\mathbb R}
\newcommand{\e}{\varepsilon}
\newcommand{\p}{\partial}
\newcommand{\trace}{\mbox{trace}\,}
\newcommand{\comment}[1]{}
\def\h{\hspace*{.24in}}
\newenvironment{myindentpar}[1]%
{\begin{list}{}%
         {\setlength{\leftmargin}{#1}}%
         \item[]%
}
{\end{list}}
\begin{document} 

\title[Spectral characterization and approximation for the Hessian eigenvalue]{A spectral characterization and an approximation scheme for the Hessian eigenvalue }
\author{Nam Q. Le}
\address{Department of Mathematics, Indiana University, 831 E 3rd St,
Bloomington, IN 47405, USA}
\email{nqle@indiana.edu}
\thanks{The research of the author was supported in part by the National Science Foundation under grant DMS-2054686.}

\subjclass[2020]{ 35J96, 35P30, 47A75}
\keywords{Eigenvalue problem, $k$-Hessian operator, spectral characterization,  non-degenerate iterative scheme, hyperbolic polynomial}


\maketitle
\begin{abstract}
We revisit the $k$-Hessian eigenvalue problem on a smooth, bounded, $(k-1)$-convex domain in $\R^n$. First, we obtain a spectral characterization of the $k$-Hessian eigenvalue as the infimum of the first eigenvalues of linear second-order elliptic operators  whose coefficients belong to the dual of the corresponding G\r{a}rding cone. Second, 
we introduce a non-degenerate inverse iterative scheme to solve the eigenvalue problem for the $k$-Hessian operator. 
We show that the scheme converges, with a rate, to the  $k$-Hessian eigenvalue for all $k$. When $2\leq k\leq n$, we also prove a local $L^1$ convergence of the Hessian of solutions of the scheme. 
Hyperbolic polynomials play an important role in our analysis.
\end{abstract}

\section{Introduction and statements of the main results} 
In this paper, we consider the $k$-Hessian counterparts of some results on  the Monge-Amp\`ere eigenvalue problem. We begin by recalling these results and relevant backgrounds.
\subsection{The Monge-Amp\`ere eigenvalue problem}
The Monge-Amp\`ere eigenvalue problem on smooth, bounded and uniformly convex domains $\Omega$ in $\R^n$ ($n\geq 2$) was 
first investigated by Lions \cite{Lions}. He showed that there exist a unique positive constant $\lambda=\lambda(n;\Omega)$ and a unique 
(up to positive multiplicative constants) nonzero 
convex function $u\in C^{1,1}(\overline{\Omega})\cap C^{\infty}(\Omega)$ solving the eigenvalue problem for the Monge-Amp\`ere operator
$\det D^2 u$: 
 \begin{equation}\label{EVP_eq}
\det D^2 u~ = \lambda^n|u|^n~\text{in} ~ \Omega~\text{and}~
u = 0~\text{on}~ \p\Omega.
  \end{equation}
The constant $\lambda(n;\Omega)$ is called 
the Monge-Amp\`ere eigenvalue of $\Omega$. 
The functions $u$ solving (\ref{EVP_eq}) are called the Monge-Amp\`ere eigenfunctions. Lions also found a spectral characterization of Monge-Amp\`ere eigenvalue via the first eigenvalues of linear second-order elliptic operators in non-divergence form.

 Let $V_n=V_n(\Omega)$ be the set of all matrices $A=(a_{ij})_{1\leq i, j\leq n}$ with $a_{ij}\in C(\Omega)$,
 $$(a_{ij})= (a_{ji})>0\quad\text{in }\Omega,\quad \text{and }  \det (A) \geq \frac{1}{n^n}.$$
 
 For $A\in V_n$, let $\lambda_1^A$ be the first (positive) eigenvalue of the linear second order operator $-a_{ij} D_{ij}$ with zero Dirichlet boundary condition on $\p\Omega$ so there exist $v\in W^{2, n}_{loc}(\Omega)\cap C(\overline{\Omega})$  with $v\not\equiv 0$ such that
 \begin{equation*}
 -a_{ij} D_{ij} v= \lambda_1^A v\quad\text{in }\Omega,\quad v=0\quad \text{on }\p\Omega.
 \end{equation*}
   The corresponding eigenfunctions $v$, up to multiplicative constants, are positive in $\Omega$ and unique. We refer the readers to the Appendix in \cite{Lions} for more information about the first eigenvalues for  $-a_{ij} D_{ij}$ where $A\in V_n$.  Lions \cite{Lions} showed that

 \begin{equation}
 \label{Llam1}
 \lambda(n; \Omega) =\min_{A\in V_n} \lambda^A_1.
 \end{equation}

A variational characterization of $\lambda(n; \Omega)$  was first discovered by Tso \cite{Tso}.
Denote the Rayleigh quotient (for the Monge-Amp\`ere operator) of a nonzero convex function $u$ by $$R_n(u) = \frac{\int_{\Omega} |u|\det D^2 u~dx}{\int_{\Omega} |u|^{n+1}~dx}.$$ 
When $u$ is merely a convex function, $\det D^2 u~ dx$ is interpreted as the Monge-Amp\`ere measure associated with $u$; see Figalli \cite{F2} and Guti\'errez \cite{G01}. 
Tso showed that 
\begin{equation}
\label{lam_def1}
 [\lambda(n; \Omega)]^n =\inf\left\{ R_n(u): u\in C^{0,1}(\overline{\Omega})\cap C^{\infty}(\Omega),
 ~u~\text{is convex, nonzero in } \Omega,~ u=0~\text{on}~\p\Omega\right\}.
 \end{equation}
 \vglue 0.2cm
Recently, the author \cite{LSNS} studied the Monge-Amp\`ere eigenvalue problem for general 
open bounded convex domains and established the singular counterparts of previous results by  Lions and Tso. 
 Let $\Omega$ be a bounded open convex domain in $\R^n$. Define the constant $\lambda=\lambda[n; \Omega]$ via infimum of the Rayleigh quotient by
\begin{equation}
\label{lam_def}
 (\lambda[n; \Omega])^n =\inf\left\{ R_n(u): u\in C(\overline{\Omega}),
 ~u~\text{is convex, nonzero in } \Omega,~ u=0~\text{on}~\p\Omega\right\}.
 \end{equation}
 
Then, by \cite{LSNS}, the infimum in (\ref{lam_def}) is achieved
 because there exists a 
 nonzero convex eigenfunction $u\in C(\overline{\Omega})\cap C^{\infty}(\Omega)$ solving the Monge-Amp\`ere eigenvalue problem (\ref{EVP_eq}) with $\lambda=\lambda[n; \Omega]$.
 When $\Omega$ is a smooth, bounded and uniformly convex domain, the class of competitor functions in the minimization problem (\ref{lam_def}) is larger than that of the minimization problem (\ref{lam_def1}); however, it was shown in \cite{LSNS} that $\lambda(n;\Omega)= \lambda [n;\Omega].$

In \cite{AK}, Abedin and Kitagawa introduced a numerically appealing inverse iterative scheme
\begin{equation}\label{IIS}
\det D^2u_{m+1} = R_n(u_m) |u_m|^n  \quad \text{in } \Omega, ~
u_{m+1} = 0  \quad \text{on } \partial \Omega
\end{equation}
 to solve 
 the Monge-Amp\`ere eigenvalue problem (\ref{EVP_eq})
  on a bounded convex domain $\Omega\subset\R^n$. They proved that the scheme (\ref{IIS}) converges to the Monge-Amp\`ere eigenvalue problem (\ref{EVP_eq}) for all convex initial data $u_0$ satisfying $R_n(u_0) < \infty$, $u_0\leq 0$ on $\p\Omega$, and
$\det D^2 u_0 \geq 1$ in $\Omega$. When $m\geq 1$, (\ref{IIS}) is a degenerate Monge-Amp\`ere equation for $u_{m+1}$ because the right hand side tends to $0$ near  the boundary $\p\Omega$.

 In this paper, we prove  a spectral characterization of the $k$-Hessian eigenvalue similar to (\ref{Llam1}) (Theorem \ref{kHessL}), and study a non-degenerate inverse iterative scheme (\ref{kIS}), similar to (\ref{IIS}), to solve the $k$-Hessian eigenvalue problem. We will review this problem in Section \ref{kEVP_sec}.  The main results concerning the scheme (\ref{kIS}) include convergence to the $k$-Hessian eigenvalue (Theorem \ref{conRk}) and local $W^{2,1}$ type convergence (Theorem \ref{W21k}).
 The common thread in our investigation is hyperbolic polynomials to be reviewed in Section \ref{Hypersect}. Our approach, which is based on certain integration by parts inequalities, differs from \cite{AK} even in the Monge-Amp\`ere case. As an illustration, for the Monge-Amp\`ere case, our approach gives a sharp reverse Aleksandrov estimate for the Monge-Amp\`ere equation and  a convergence rate of $R_n(u_m)$ to $(\lambda(n;\Omega))^n$ in terms of the convergence rate of $u_m$ to a nonzero Monge-Amp\`ere eigenfunction $u_{\infty}$ (see, Theorem \ref{conRk} $(ii, iii)$).
  This is new compared to currently known   iteration schemes for the $p$-Laplace equation \cite{BEM, Boz, HL}.

\subsection{The $k$-Hessian eigenvalue problem}
\label{kEVP_sec}
Let $1\leq k\leq n$ ($n\geq 2$).
 Let $\Omega$ be a bounded open and smooth domain in $\R^n$.
 For a function $u\in C^2(\Omega)$, let $S_k(D^2 u)$ denote the $k$-th elementary symmetric function of the eigenvalues $\lambda(D^2 u)=(\lambda_1(D^2 u), \cdots,\lambda_n(D^2 u))$ of the Hessian matrix $D^2 u$:
$$S_k(D^2 u)=\sigma_k(\lambda(D^2 u)):=\sum_{1\leq i_1<\cdots<i_k\leq n}\lambda_{i_1}(D^2 u)\cdots \lambda_{i_k}(D^2 u).$$
For convenience, we denote $\sigma_0(\lambda)=1.$
A function $u\in C^2(\Omega)\cap C(\overline{\Omega})$ is called $k$-admissible if $\lambda(D^2 u)\in \Gamma_k$
where $\Gamma_k$ is an open symmetric convex cone in $\R^n$, with vertex at the origin, given by
\begin{equation}
\label{Gak}
\Gamma_k=\{\lambda=(\lambda_1, \cdots,\lambda_n)\in \R^n\mid \sigma_j(\lambda)>0\quad\forall j=1, \cdots, k\}.
\end{equation}
We also call $\Gamma_k$ the G\r{a}rding cone of the $k$-Hessian operator. 
{\it All functions involved in $S_k$ below are assumed to be $k$-admissible. }
 If $k\geq 2$, we also assume $\p\Omega$ to be uniformly $(k-1)$-convex, that is, $\sigma_{k-1}(\kappa_1,\cdots,\kappa_{n-1})\geq c_0>0$ where $\kappa_1,\cdots,\kappa_{n-1}$ are principle curvatures of $\p\Omega$ relative to the interior normal.  Note that $n$-admissible functions are strictly convex, and uniformly $(n-1)$-convex domains are simply uniformly convex domains.

The eigenvalue problem for the $k$-Hessian operator
$S_k(D^2 u)$
 on a bounded, open, smooth and $(k-1)$-convex domain $\Omega$ in $\R^n$  
 \begin{equation}
 \label{kEVP_eq}
   S_k (D^2 w)~=[\lambda(k;\Omega)]^k |w|^{k} \h~\text{in} ~\Omega,~
w =0\h~\text{on}~\p \Omega
\end{equation}
was first introduced by Wang in \cite{W1} (see also \cite{W2}) who extended the results of Lions \cite{Lions} and Tso \cite{Tso} from the case $k=n$ to the general case $1\leq k\leq n$.
Wang introduced the constant
\begin{equation}
\label{Wscheme}
\lambda_1=\sup\{\lambda>0; \text{there is a solution } u_{\lambda}\in C^2(\overline{\Omega}) \text{ of } (\ref{Wappro})\}
\end{equation}
where (\ref{Wappro}) is given by
\begin{equation}
\label{Wappro} S_k(D^2 u)=(1-\lambda u)^k~\text{in }\Omega,\quad u=0\text{ on }\p \Omega.
\end{equation}
Wang \cite{W1} showed that $\lambda_1\in (0,\infty)$, and that as $\lambda\rightarrow\lambda_1$, $u_{\lambda}\|u_{\lambda}\|^{-1}_{L^{\infty}(\Omega)}$ converges in $C^{\infty}(\Omega)\cap C^{1,1}(\overline{\Omega})$ to a solution $w\in C^{\infty}(\Omega)\cap C^{1,1}(\overline{\Omega})$ of (\ref{kEVP_eq}) with $\lambda(k;\Omega)=\lambda_1$ there. 

The eigenvalue problem (\ref{kEVP_eq}) has the following uniqueness property: if $(\bar \lambda,\bar w)$ solves (\ref{kEVP_eq}), where $\bar\lambda\geq 0$, $\bar w\in C^{\infty}(\Omega)\cap C^{1,1}(\overline{\Omega})$ is k-admissible with $w=0$ on $\p \Omega$, then $\bar \lambda=\lambda (k;\Omega)$
and $\bar w=c w$ for some positive constant $c$. The constant $\lambda(k;\Omega)$ is called the $k$-Hessian eigenvalue and $w$ in (\ref{kEVP_eq}) is called a $k$-Hessian eigenfunction. The scheme (\ref{Wscheme})-(\ref{Wappro}) to compute the $k$-Hessian eigenvalue involves solving the $k$-Hessian equation (\ref{Wappro}) with right hand side depending on the solution $u$ itself. This equation is more difficult to handle, analytically and numerically, than one with right hand side depending only on the spatial variables.

Let $R_k(u)$ denote the Rayleigh quotient for the $k$-Hessian operator
\begin{equation}
\label{RQk}
R_k(u) = \frac{\int_{\Omega} |u| S_k(D^2 u)~dx}{\int_{\Omega} |u|^{k+1}~dx}
\end{equation}
 for a $C^2$ function $u$. 
Implicit in the definition (\ref{RQk}) is the requirement that 
$\|u\|_{L^{k+1}(\Omega)}<\infty.$

Wang \cite{W1} also proved the following fundamental property for the variational characterization of $\lambda(k;\Omega)$: 
\begin{equation}
\label{klamR}
\small
 [\lambda(k;\Omega)]^k =\inf\left\{ R_k(u): u\in C(\overline{\Omega})\cap C^2(\Omega),\text{u is }k\text{-admissible, nonzero in } \Omega, u=0~\text{on}~\p\Omega\right\}.
 \end{equation}
 Using (\ref{klamR}), Liu-Ma-Xu \cite{LMX} obtained a Brunn-Minkowski inequality for the 2-Hessian eigenvalue in three-dimensional convex domains.

\subsection{A spectral characterization of the $k$-Hessian eigenvalue}
 Let $x\cdot y$ denote the standard inner product for $x, y\in\R^n$.
Following Kuo-Trudinger \cite{KT}, let $\Gamma_k^{\ast}$ be the dual cone of the G\r{a}rding cone $\Gamma_k$, given by
 \begin{equation}
 \label{Gaka}
 \Gamma_k^{\ast} =\{\lambda\in\R^n\mid \lambda\cdot\mu \geq0\quad\forall \mu\in\Gamma_k\}.
 \end{equation}
Clearly
 $\Gamma_k^\ast\subset\Gamma_l^\ast\quad\text{for } k\leq l.$
 For $\lambda\in\Gamma_k^{\ast}$, denote
 $$\rho_k^{\ast}(\lambda)=\inf\left\{\frac{\lambda\cdot\mu}{n} \mid \mu\in\Gamma_k, S_k(\mu)\geq {n \choose k}\right\}.$$
 Observe that $\Gamma^\ast_n=\overline{\Gamma_n}$. If $\lambda= (\lambda_1,\cdots,\lambda_n)\in \Gamma_n^\ast$ then $\lambda_i\geq 0$ and $\rho_n^\ast(\lambda)= (\prod_{i=1}^n \lambda_i)^{1/n}$.
 \vglue 0.1cm
 For a matrix $A=(a_{ij})_{1\leq i,j\leq n}$, we write $A\in \Gamma^{\ast}_k$ if $\lambda(A)\in \Gamma^{\ast}_k$ and define $$\rho_k^\ast(A)=\rho_k^\ast(\lambda(A)).$$
 Let $V_k=V_k(\Omega)$ be the following set of positive definite symmetric matrices whose entries are continuous functions on $\Omega$:
 \begin{equation}
 \label{Vkeq}
 \small
 V_k=\left \{A=(a_{ij})_{1\leq i, j\leq n},~  (a_{ij})= (a_{ji})>0\text{ in }\Omega,~ a_{ij}\in C(\Omega),~ A\in \Gamma_k^\ast,~ \text{and }  \rho_k^\ast(A) \geq \frac{1}{n}{n \choose k}^{1/k}\right\}.
 \end{equation}
 Note that 
 $V_k\subset V_{k+1}.$
 This follows from the Maclaurin inequalities and $c(n, k)> c(n, k+1)$ for all $k\leq n-1$ where $c(n, k):=\frac{1}{n}{n \choose k}^{1/k}$.
 Indeed, suppose $A\in V_k$. If $\mu\in \Gamma_{k+1}$, with $S_{k+1}(\mu) \geq {n\choose k+1}$, then from Maclaurin's inequality, 
 $$\left(\frac{S_k(\mu)}{{n\choose k}}\right)^{\frac{1}{k}}\geq \left(\frac{S_{k+1}(\mu)}{{n\choose k+1}}\right)^{\frac{1}{k+1}},$$
 we find that $\mu\in \Gamma_k$ with  $S_k(\mu) \geq {n\choose k}.$ Hence, 
 $\frac{\lambda(A)\cdot\mu}{n}\geq \rho_k^\ast(A)\geq c(n, k)>c(n, k+1),$
 and therefore $A\in V_{k+1}$. That $c(n, k)>c(n, k+1)$ follows from
 \begin{eqnarray*}\left[\frac{c(n, k)}{c(n, k+1)}\right]^{k(k+1)}={n\choose k} \left[{n\choose k}{n\choose k+1}^{-1}\right]^k&=&{n\choose k}  \frac{(k+1)^k}{(n-k)^k}\\&=& \frac{n(n-1)\cdots (n-k+1)}{(n-k)^k} \frac{(k+1)^k}{k!}>1. 
 \end{eqnarray*}
  We now have the following increasing sequence of cones:
 $$V_1\subset V_2\subset\cdots \subset V_{n-1}\subset V_n.$$
 Extending Lions' result (\ref{Llam1}) from $k=n$ to all other values of $k$, we have  the following theorem.
\begin{thm}[A spectral characterization for the Hessian eigenvalue] 
\label{kHessL}
Assume $1\leq k\leq n$.
 Let $\Omega$ be a bounded, open, smooth, and uniformly convex domain in $\R^n$. Let $V_k$ be as in (\ref{Vkeq}). For $A\in V_k$, let $\lambda_1^A$ be the first positive eigenvalue of the linear second order operator $-a_{ij} D_{ij}$ with zero Dirichlet boundary condition on $\p\Omega$. Then
 \begin{equation}
 \lambda(k;\Omega) =\min_{A\in V_k} \lambda^A_1.
 \end{equation}
\end{thm}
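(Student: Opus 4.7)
The plan is to establish the two one-sided inequalities $\min_{A\in V_k}\lambda_1^A\leq \lambda(k;\Omega)$ and $\min_{A\in V_k}\lambda_1^A\geq \lambda(k;\Omega)$ separately. For the upper bound I will exhibit a specific matrix in $V_k$ whose first eigenvalue equals $\lambda(k;\Omega)$. Let $w\in C^\infty(\Omega)\cap C^{1,1}(\overline{\Omega})$ be a $k$-Hessian eigenfunction with $w<0$ in $\Omega$, and set
$$A_0(x):=\frac{1}{k\,[\lambda(k;\Omega)]^{k-1}\,|w(x)|^{k-1}}\,(S_k^{ij}(D^2 w(x)))_{1\leq i,j\leq n},\qquad x\in\Omega,$$
where $S_k^{ij}=\p S_k/\p r_{ij}$ is the linearization of $S_k$. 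The Euler identity $S_k^{ij}(D^2 w)\,w_{ij}=k\,S_k(D^2 w)$ combined with (\ref{kEVP_eq}) gives $(A_0)_{ij}\,w_{ij}=\lambda(k;\Omega)\,|w|$, i.e., $-(A_0)_{ij}D_{ij}(-w)=\lambda(k;\Omega)(-w)$. Since $-w>0$ in $\Omega$ and vanishes on $\p\Omega$, uniqueness of the positive principal eigenpair forces $\lambda_1^{A_0}=\lambda(k;\Omega)$.

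To verify $A_0\in V_k$, continuity of the entries on $\Omega$ is immediate, and positive definiteness of $(S_k^{ij}(D^2 w))$ on the G\r{a}rding cone $\Gamma_k$ is classical. To check $\rho_k^\ast(A_0)\geq c(n,k)$, I invoke the polarized form of G\r{a}rding's inequality: for $\lambda,\mu\in\Gamma_k$,
$$\sum_i \sigma_{k-1}(\lambda|i)\,\mu_i\geq k\,\sigma_k(\lambda)^{(k-1)/k}\,\sigma_k(\mu)^{1/k},$$
applied with $\lambda=\lambda(D^2 w)$. Dividing through by $k\,S_k(D^2 w)^{(k-1)/k}=k\,[\lambda(k;\Omega)]^{k-1}|w|^{k-1}$ yields $\lambda(A_0)\cdot\mu\geq S_k(\mu)^{1/k}$, so $\lambda(A_0)\cdot\mu\geq\binom{n}{k}^{1/k}$ whenever $S_k(\mu)\geq\binom{n}{k}$, and hence $\rho_k^\ast(A_0)\geq c(n,k)$.

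For the reverse inequality, fix any $A\in V_k$ with positive principal eigenfunction $v$. I claim that $w$ is a positive supersolution of the linear problem: $-a_{ij}D_{ij}(-w)\geq \lambda(k;\Omega)(-w)$ pointwise in $\Omega$. Granted this, the Berestycki--Nirenberg--Varadhan characterization of the principal eigenvalue for non-divergence operators (as in the Appendix of \cite{Lions}) forces $\lambda_1^A\geq\lambda(k;\Omega)$. The claim reduces to $\mathrm{tr}(A\,D^2 w)\geq S_k(D^2 w)^{1/k}=\lambda(k;\Omega)|w|$, which I prove in two steps. First, the rearrangement (von Neumann) trace inequality gives $\mathrm{tr}(A\,D^2 w)\geq \lambda(A)\cdot\widetilde\mu$, where $\widetilde\mu$ is the oppositely ordered rearrangement of $\mu:=\lambda(D^2 w)\in\Gamma_k$; the permutation symmetry of $\Gamma_k$ guarantees $\widetilde\mu\in\Gamma_k$ with $S_k(\widetilde\mu)=S_k(\mu)$. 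Second, the hypothesis $\rho_k^\ast(A)\geq c(n,k)$ combined with the $k$-homogeneity of $S_k$ upgrades, via the scaling $\nu\mapsto t\nu$ chosen so that $S_k(t\nu)=\binom{n}{k}$, to the pointwise bound $\lambda(A)\cdot\nu\geq S_k(\nu)^{1/k}$ for every $\nu\in\Gamma_k$. Applying this with $\nu=\widetilde\mu$ closes the argument.

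The main obstacle I anticipate is invoking G\r{a}rding's inequality with precisely the right normalization so that $c(n,k)$ is exactly the constant demanded by the definition of $V_k$, which makes the inclusion $A_0\in V_k$ sharp; this is the technical core of the upper bound. The rearrangement trace bound together with the permutation symmetry of $\Gamma_k$ is the other critical ingredient, needed for the lower bound. Once both are in place, the BNV sub/supersolution principle mechanically closes the argument. A minor technicality worth flagging is that $A_0$ degenerates on $\p\Omega$ (because $|w|^{k-1}\to 0$ there), but since $V_k$ only requires continuity on the open set $\Omega$, this causes no issue.
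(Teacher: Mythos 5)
Your proof is correct and follows essentially the same route as the paper: the minimizer is the same normalized linearized matrix $\frac{1}{k}[S_k(D^2w)]^{-(k-1)/k}(S_k^{ij}(D^2w))$, and the lower bound comes from showing $w$ is a supersolution of each $-a_{ij}D_{ij}-\lambda(k;\Omega)$ and invoking the principal-eigenvalue comparison from the Appendix of \cite{Lions}. The only difference is cosmetic: where the paper cites the Kuo--Trudinger inequality (Proposition \ref{KTprop}) to get $\mathrm{tr}(AD^2w)\geq [S_k(D^2w)]^{1/k}$, you reprove it via the rearrangement trace inequality plus the homogeneity/scaling argument, which is a valid self-contained substitute.
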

The interest of the above theorem is when $k\geq 2$. When $k=1$, we have
$$V_1=\{m I_n,~m\geq 1\}$$
where $I_n$ is the identity $n\times n$ matrix and thus the conclusion of Theorem \ref{kHessL} is obvious.

\subsection{A non-degenerate inverse iterative scheme for the $k$-Hessian eigenvalue problem}
Inspired by the scheme (\ref{IIS}), we propose the following non-degenerate inverse iterative scheme, to solve 
 the eigenvalue problem (\ref{kEVP_eq}), starting from  a $k$-admissible function $u_0\in C^2(\overline{\Omega})$ 
 with $u_0\leq 0$ on $\p\Omega$
\begin{equation}\label{kIS}
S_k (D^2u_{m+1}) = R_k(u_m) |u_m|^k + (m+1)^{-2}  \quad \text{in } \Omega, ~
u_{m+1} = 0  \quad \text{on } \partial \Omega.
\end{equation}
We add the positive constant $(m+1)^{-2}$, which vanishes in the limit $m\rightarrow\infty$, to make the right hand side of (\ref{kIS}) strictly positive for each $m$. Thus, for each $m\geq 0$, (\ref{kIS}) is a non-degenerate $k$-Hessian equation for $u_{m+1}$. See also Remark \ref{remam}.
The requirement $u_0\leq 0$ on $\p\Omega$ is only used to have $u_0\leq 0$ in $\Omega$ and thus $R_k(u_0) |u_0|^k= R_k(u_0)(-u_0)^k \in C^2(\overline{\Omega})$.
  \vglue 0.1cm
 By a classical result of Caffarelli-Nirenberg-Spruck \cite[Theorem 1]{CNS2}  (see also \cite[Theorem 3.4]{W2}), for each $m$, (\ref{kIS}) has a unique $k$-admissible solution $u_{m+1}\in C^{3,\alpha}(\overline{\Omega})$ for all $0<\alpha<1$. Moreover, $u_m< 0$ in $\Omega$ for all $m\geq 1$.
 The sequence $(u_m)$ is obtained by repeatedly inverting the $k$-Hessian operator with Dirichlet boundary condition.

 In the next theorem,  we show that $R(u_m)$ converges to $[\lambda(k;\Omega)]^k$, thus making the scheme (\ref{kIS}) more appealing for numerically computing the $k$-Hessian eigenvalue $\lambda(k;\Omega)$.
 \begin{thm}[Convergence to the Hessian eigenvalue of the non-degenerate inverse iterative scheme]
 \label{conRk}
 Let $1\leq k\leq n$ where $n\geq 2$. Let $\Omega$ be a bounded, open, smooth domain in $\R^n$. Assume that $\p\Omega$ is uniformly $(k-1)$-convex if $k\geq 2$. 
Consider the reverse iterative scheme (\ref{kIS}) where $u_0\in C^{2}(\overline{\Omega})$ with $u_0\leq 0$ on $\p\Omega$, and $u_m$ is $k$-admissible for all $m\geq 0$.  Let $w\in C^{\infty}(\Omega)\cap C^{1,1}(\overline{\Omega})$ be a nonzero $k$-Hessian eigenfunction as in (\ref{kEVP_eq}). Then
\begin{enumerate}
\item [(i)] $R_k(u_m)$ converges to  $[\lambda(k;\Omega)]^k$:
 \begin{equation}
\label{Rku_con}
\lim_{m\rightarrow\infty} R_k(u_m)= [\lambda(k;\Omega)]^k.
\end{equation}
\item[(ii)] There exists a subsequence $u_{m_j}$ that converges weakly in $W^{1, q}_{loc}(\Omega)$ for all $q<\frac{nk}{n-k}$ to a nonzero function $u_{\infty}\in W^{1, q}_{loc}(\Omega)\cap L^{k}(\Omega)$. Moreover, 
\begin{equation}
\label{umLk}
\lim_{j\rightarrow\infty} \int_\Omega |u_{m_j}|^k~dx= \int_\Omega |u_{\infty}|^k~dx
\end{equation}
and, for all $m\geq 1$, 
\begin{equation}
\label{umRate}
R_k^{1/k}(u_m)-\lambda(k;\Omega) \leq  \lambda(k;\Omega)\frac{  \int_{\Omega}(|u_{m+1}|- |u_{m}|) |w|^k~dx} {\int_{\Omega} |u_1| |w|^k~dx}\leq  \lambda(k;\Omega)\frac{  \int_{\Omega}(|u_{\infty}|- |u_{m}|) |w|^k~dx} {\int_{\Omega} |u_1| |w|^k~dx}. 
\end{equation}
\item[(iii)] When $k=n$, $\{u_m\}$ converges uniformly on $\overline{\Omega}$ to a non-zero Monge-Amp\`ere eigenfunction $u_{\infty}$ of $\Omega$. 
\item[(iv)] When $k=1$, $\{u_m\}$ converges in $W_0^{1, 2}(\Omega)$ to a non-zero first Laplace eigenfunction $u_{\infty}$ of $\Omega$.
\end{enumerate}
  \end{thm}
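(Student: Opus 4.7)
The plan relies on the interplay between Wang's variational characterization (\ref{klamR}) and a ``mixed Hessian Cauchy--Schwarz'' inequality drawn from the hyperbolic polynomial structure of $S_k$. The key ingredient, obtained by combining the mixed Hessian integration by parts of Trudinger--Wang with the G{\aa}rding--Newton--Maclaurin inequality, is the following: for any two $k$-admissible functions $\phi,\psi\in C^2(\overline{\Omega})$ vanishing on $\p\Omega$,
\begin{equation*}
\int_\Omega(-\phi)S_k(D^2\psi)\,dx \leq \left(\int_\Omega(-\phi)S_k(D^2\phi)\,dx\right)^{\!1/(k+1)} \left(\int_\Omega(-\psi)S_k(D^2\psi)\,dx\right)^{\!k/(k+1)}.
\end{equation*}
This is the $k$-Hessian analogue of Cauchy--Schwarz (recovered at $k=1$) and is precisely the kind of integration by parts inequality mentioned in the introduction.

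For part (i), I would insert $\phi=u_m$ and $\psi=u_{m+1}$ above, use (\ref{kIS}) to substitute $S_k(D^2 u_{m+1})=R_k(u_m)|u_m|^k+(m+1)^{-2}$ on the left, and read off that
$$a_m:=R_k(u_m)\int_\Omega|u_m|^{k+1}\,dx$$
is almost-monotone increasing, $a_m\leq a_{m+1}+C(m+1)^{-2}$. Testing (\ref{kIS}) instead against $-u_{m+1}$ and applying H\"older's inequality to $\int|u_{m+1}||u_m|^k\,dx$ yields the complementary bound $b_{m+1}\leq b_m+C(m+1)^{-2}$ for $b_m:=R_k(u_m)\bigl(\int_\Omega|u_m|^{k+1}\,dx\bigr)^{k/(k+1)}$. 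Since $(m+1)^{-2}$ is summable and (\ref{klamR}) furnishes $R_k(u_m)\geq[\lambda(k;\Omega)]^k>0$, a short bootstrap bounds $a_m$ and $b_m$ above and below by positive constants and forces them both to converge; hence $R_k(u_m)$ and $\int_\Omega|u_m|^{k+1}\,dx$ converge. The limit of $R_k(u_m)$ is identified with $[\lambda(k;\Omega)]^k$ by inserting into (\ref{klamR}) a subsequential limit from (ii), which is $k$-admissible in a suitable weak sense.

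For (ii), the Hessian--Sobolev inequality together with the bound on $a_m$ gives $\|u_m\|_{L^{k+1}(\Omega)}\leq C$, while the Trudinger--Wang interior $W^{1,q}$ estimate ($q<nk/(n-k)$) supplies local weak compactness; a subsequence $u_{m_j}\rightharpoonup u_\infty$ in $W^{1,q}_{loc}(\Omega)$ then yields (\ref{umLk}) via Rellich, and $u_\infty\not\equiv 0$ because $\int_\Omega|u_m|^{k+1}\,dx\geq c>0$ from the bootstrap above---this nontriviality is the main technical obstacle, secured precisely by the almost-monotonicity of $a_m$ coupled with the variational lower bound on $R_k(u_m)$. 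For the rate (\ref{umRate}), I would test (\ref{kIS}) against $-w$, apply the mixed Hessian integration by parts identity
$$\int_\Omega(-u_{m+1})S_k(D^2 w)\,dx = \int_\Omega(-w)\,S_k\bigl(D^2u_{m+1},\underbrace{D^2 w,\ldots,D^2 w}_{k-1}\bigr)\,dx,$$
and invoke the pointwise G{\aa}rding inequality $S_k(D^2u_{m+1},D^2 w,\ldots,D^2 w)\geq S_k(D^2u_{m+1})^{1/k}S_k(D^2 w)^{(k-1)/k}$. Substituting $S_k(D^2 u_{m+1})\geq R_k(u_m)|u_m|^k$ and $S_k(D^2 w)=[\lambda(k;\Omega)]^k|w|^k$ condenses everything to
$$R_k(u_m)^{1/k}\int_\Omega|u_m||w|^k\,dx \leq \lambda(k;\Omega)\int_\Omega|u_{m+1}||w|^k\,dx,$$
which rearranges to the first bound of (\ref{umRate}); the same inequality (using $R_k(u_{m-1})^{1/k}\geq\lambda(k;\Omega)$) forces $m\mapsto\int_\Omega|u_m||w|^k\,dx$ to be non-decreasing with limit $\int_\Omega|u_\infty||w|^k\,dx$, and telescoping then gives the second bound. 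For (iii) with $k=n$, Pogorelov and Caffarelli estimates upgrade the weak compactness to uniform convergence on $\overline{\Omega}$, and uniqueness (up to positive multiple) of the Monge--Amp\`ere eigenfunction pins down $u_\infty$. For (iv) with $k=1$, the scheme (\ref{kIS}) is linear in $u_{m+1}$ (standard inverse-power iteration for $-\Delta$), and a direct $H_0^1$ energy estimate promotes the $L^2$ convergence from (ii) to $W_0^{1,2}$.
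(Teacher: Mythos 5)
Your central computation---testing (\ref{kIS}) against $-w$, integrating by parts to move $S_k$ onto $w$, and applying the pointwise G\r{a}rding inequality $S_k(A,B,\dots,B)\ge [S_k(A)]^{1/k}[S_k(B)]^{(k-1)/k}$---is exactly the paper's Proposition \ref{ibplem}, and the resulting inequality $R_k(u_m)^{1/k}\int_\Omega|u_m||w|^k\,dx\le\lambda(k;\Omega)\int_\Omega|u_{m+1}||w|^k\,dx$, together with the monotonicity and boundedness of $\int_\Omega|u_m||w|^k\,dx$, is precisely how the paper proves both (i) and (\ref{umRate}). So the heart of your argument is sound and coincides with the paper's. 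Your additional use of the Verbitsky-type inequality to obtain genuine monotonicity of $a_m$ is a legitimate variant; the paper only records the near-monotonicity of $b_m$, which already yields the uniform $L^{k+1}$ bound.

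There are, however, three concrete gaps. First, your stated identification of $\lim R_k(u_m)$ in (i)---``inserting a subsequential limit into (\ref{klamR})''---does not work: (\ref{klamR}) only gives $R_k(v)\ge[\lambda(k;\Omega)]^k$ for competitors $v$, which is the wrong direction, and in any case $u_\infty$ is only in $W^{1,q}_{loc}\cap L^k$, while the weak theory needed to make sense of $R_k(u_\infty)$ for $2\le k\le n-1$ is exactly what the paper flags as open. Fortunately this step is redundant: your own rate inequality gives $R_k(u_m)^{1/k}\le\lambda(k;\Omega)\,\int_\Omega|u_{m+1}||w|^k\,dx\,/\int_\Omega|u_m||w|^k\,dx$, and convergence of the monotone bounded sequence $\int_\Omega|u_m||w|^k\,dx$ then yields $\limsup R_k(u_m)^{1/k}\le\lambda(k;\Omega)$ directly. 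Second, your nontriviality argument for $u_\infty$ via $\int_\Omega|u_m|^{k+1}\,dx\ge c>0$ is incomplete: the compact Sobolev embedding from $W^{1,q}_{loc}$ with $q<nk/(n-k)$ need not reach $L^{k+1}_{loc}$ (e.g.\ $k=1$, $n\ge5$, or $k=2$, $n\ge13$), and a lower bound on the $L^{k+1}$ norm is not contradicted by $u_{m_j}\to0$ in $L^k$. The paper instead deduces nontriviality from the increasing sequence $\int_\Omega|u_m||w|^k\,dx\ge\int_\Omega|u_1||w|^k\,dx>0$ together with (\ref{umLk}); since you establish this monotonicity anyway, the fix is immediate. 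Third, part (iii) is far too thin: uniform convergence follows from the Aleksandrov estimate and convexity (not from Pogorelov/Caffarelli interior estimates, which degenerate at the boundary), and the real work is identifying the limit---one must pass to the limit in the equation to get $\det D^2 w_\infty=[\lambda(n;\Omega)]^n|u_\infty|^n$ with $w_\infty=\lim u_{m_j+1}$ a priori different from $u_\infty$, prove $\|w_\infty\|_{L^{n+1}}=\|u_\infty\|_{L^{n+1}}$ by a two-sided iteration of the monotonicity inequality, and only then invoke H\"older, the variational characterization, and uniqueness to conclude that $w_\infty=u_\infty$ is an eigenfunction; full (not merely subsequential) convergence then requires the invariant $L=\lim_m\int_\Omega|u_m||w|^n\,dx$. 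None of this appears in your sketch.
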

  We point out that part $(iv)$ of Theorem \ref{conRk} was included for completeness, as it was contained in \cite{BEM, Boz, HL} when there is no term $(m+1)^{-2}$ on the right hand side of (\ref{kIS}).
  
In the convex case when $k=n$, in view of the work \cite{LSNS}, the Monge-Amp\`ere eigenvalue problem (\ref{EVP_eq}) with $u$ only being convex (so less regular) is now well understood and this plays a key role in the proof of Theorem \ref{conRk} $(iii)$.
  The work \cite{LSNS} relies on the regularity theory
of weak solutions to the Monge-Amp\`ere equation developed by Caffarelli \cite{C1, C2}.  
To the best of the author's knowledge, for $2\leq k\leq n-1$, the $k$-Hessian counterparts of these Monge-Amp\`ere results are still lacking. Thus, 
showing that $u_{\infty}$ in Theorem \ref{conRk} $(ii)$ is a $k$-Hessian eigenfunction is still an interesting open problem.
 One possible alternate route is to upgrade the convergence of $u_m$ to $u_{\infty}$ in $W_{loc}^{1, q}(\Omega)$ to that  in $W^{2, p}_{loc}(\Omega)$ for some $p>k$. So far, we can prove a sort of local $W^{2,1}(\Omega)$ convergence. It is in fact a local $W^{2,1}(\Omega)$ convergence when $k=n$ (see also Theorem \ref{W21n}). We have the following theorem.
\begin{thm} [Local $W^{2,1}$ convergence of the non-degenerate inverse iterative scheme]
\label{W21k}
Assume $2\leq k\leq n$.
 Let $\Omega$ be a bounded, open, smooth, and uniformly $(k-1)$ convex domain in $\R^n$.
 Let $w\in C^{\infty}(\Omega)\cap C^{1,1}(\overline{\Omega})$ be a nonzero $k$-Hessian eigenfunction as in (\ref{kEVP_eq}).
Consider the scheme (\ref{kIS}) where $u_0\in C^{2}(\overline{\Omega})$ with $u_0\leq 0$ on $\p\Omega$, and $u_m$ is $k$-admissible for all $m\geq 0$. Consider a subsequence of $(u_{m_j})$ and its limit $u_{\infty}$ as in Theorem \ref{conRk} (ii). Let $\lambda_{k, i}(D^2 w, D^2 u_{m+1})$ be defined by
$$S_k(tD^2 w+ D^2 u_{m+1})=S_k(D^2 w)\prod_{i=1}^k (t+ \lambda_{k, i}(D^2 w, D^2 u_{m+1}))\quad \text{for all } t\in \R.$$
When $k=n$, $\lambda_{k, i}(D^2 w, D^2 u_{m+1})$'s are eigenvalues of $D^2 u_{m+1} (D^2 w)^{-1}$.
Then
$$\lambda_{k, i}(D^2 w, D^2 u_{m_j+1}) \rightarrow \frac{|u_{\infty}|}{|w|} \quad\text{locally in } L^1 \text{ when } j\rightarrow\infty.$$
Up to a further extraction of a subsequence, we have the following pointwise convergence:
\begin{equation}
\label{conpt}
D^2 u_{m_j+1}(x) \rightarrow \frac{|u_\infty(x)|}{|w(x)|} D^2 w(x)\quad\text{a.e. }x\in\Omega.
\end{equation}
\end{thm}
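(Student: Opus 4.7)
The plan is to exploit two properties of Gårding hyperbolic polynomials applied to the pencil $(D^2 w, D^2 u_{m+1})$: the self-adjointness of the polarized $k$-Hessian, which gives a clean integration-by-parts identity, and the Newton-Maclaurin (AM-GM) inequality for the generalized eigenvalues $\lambda_{k,i}$. For $k$-admissible functions vanishing on $\p\Omega$, the polarized $k$-Hessian is symmetric in its $k+1$ arguments, so using $S_k(D^2 w)=\lambda^k|w|^k$ (with $\lambda=\lambda(k;\Omega)$) together with $\sum_{i=1}^k \lambda_{k,i} = k\,S_k(D^2 w,\ldots,D^2 w, D^2 u_{m+1})/S_k(D^2 w)$ gives
\begin{equation*}
k \int_\Omega |u_{m+1}|\,|w|^k\,dx = \int_\Omega |w|^{k+1} \sum_{i=1}^k \lambda_{k,i}(D^2 w, D^2 u_{m+1})\,dx. \qquad(\star)
\end{equation*}
Since hyperbolicity and $k$-admissibility force $\lambda_{k,i}\ge 0$, AM-GM reads $\tfrac{1}{k}\sum_i \lambda_{k,i}\ge(\prod_i\lambda_{k,i})^{1/k}=\bigl(R_k(u_m)|u_m|^k+(m+1)^{-2}\bigr)^{1/k}/(\lambda|w|)$.

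Combining $(\star)$ with AM-GM produces
\begin{equation*}
\int_\Omega |u_{m+1}|\,|w|^k\,dx \;\ge\; \frac{1}{\lambda}\int_\Omega |w|^k\bigl(R_k(u_m)|u_m|^k + (m+1)^{-2}\bigr)^{1/k}\,dx.
\end{equation*}
Along the subsequence $m_j$ from Theorem \ref{conRk}(ii), the left-hand side converges to $\int_\Omega |u_\infty||w|^k\,dx$ (by the monotonicity encoded in (\ref{umRate}) together with the weak convergence $u_{m_j}\rightharpoonup u_\infty$ tested against $|w|^k$), and the right-hand side converges to the same limit by dominated convergence, using $R_k(u_{m_j})\to\lambda^k$, Rellich compactness to extract pointwise a.e.\ convergence $u_{m_j}\to u_\infty$, and a uniform $L^\infty$-bound on $u_m$ coming from Aleksandrov-Trudinger type estimates for the $k$-Hessian equation with bounded right-hand side. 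The non-negative AM-GM deficit
\[
\int_\Omega |w|^{k+1}\left[\tfrac{1}{k}\sum_i \lambda_{k,i}(D^2 w, D^2 u_{m_j+1}) - \left(\prod_i \lambda_{k,i}(D^2 w, D^2 u_{m_j+1})\right)^{1/k}\right]\,dx
\]
therefore tends to zero, and since $|w|^{k+1}$ is bounded below on any $K\Subset\Omega$, the bracket vanishes in $L^1_{\mathrm{loc}}(\Omega)$. As the geometric mean itself converges to $|u_\infty|/|w|$ in $L^1_{\mathrm{loc}}$ by dominated convergence, so does the arithmetic mean.

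To pass from sum-convergence to convergence of each $\lambda_{k,i}$, I extract a further subsequence along which both means converge pointwise a.e.\ to $G:=|u_\infty|/|w|$; wherever $G>0$ (a.e.\ on $\Omega$), the AM-GM equality criterion for non-negative numbers forces $\lambda_{k,i}^{(m_j)}(x)\to G(x)$ for every $i$. Generalized dominated convergence, with majorant $\sum_i \lambda_{k,i}^{(m_j)}\to kG$ in $L^1_{\mathrm{loc}}$, then yields $\lambda_{k,i}(D^2 w, D^2 u_{m_j+1})\to G$ in $L^1_{\mathrm{loc}}(\Omega)$ for each $i$. For the pointwise matrix convergence, I invoke interior $C^2$-estimates for $k$-admissible solutions of $S_k(D^2 u)=f$ with bounded positive $f$ (of Pogorelov/Chou-Wang type), giving $\|D^2 u_{m+1}\|_{L^\infty(K)}\le C_K$ uniformly in $m$ on every $K\Subset\Omega$. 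Along any sub-subsequence on which $D^2 u_{m_j+1}(x)\to H(x)$ pointwise a.e., the relations $\lambda_{k,i}(D^2 w(x), H(x))=G(x)$ for all $i$ put the pencil in the equality case of the Newton-Gårding inequality $S_k(A,\ldots,A,B)^k\ge S_k(A)^{k-1}S_k(B)$, which forces $B$ to be a non-negative scalar multiple of $A$ whenever $A$ lies in the interior of $\Gamma_k$; hence $H(x)=G(x)D^2 w(x)$. Uniqueness of the subsequential limit then yields (\ref{conpt}).

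The main technical hurdle is this final identification of the limit Hessian for $2\le k<n$: when $k=n$ the $\lambda_{k,i}$'s are honest eigenvalues of $D^2 u_{m+1}(D^2 w)^{-1}$ and scalar convergence upgrades trivially to matrix convergence, whereas for $k<n$ one must simultaneously deploy uniform interior $C^2$ estimates (for pointwise compactness of $D^2 u_{m+1}$) and the full rigidity in the Newton-Gårding equality case. This is precisely the point at which the hyperbolic-polynomial viewpoint, rather than purely convex-analytic tools tailored to Monge-Amp\`ere, becomes indispensable.
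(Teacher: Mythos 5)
Your overall skeleton matches the paper's: integration by parts against $|w|$, G\r{a}rding/AM--GM for the generalized eigenvalues $\lambda_{k,i}(D^2w,D^2u_{m+1})$, vanishing of the AM--GM deficit from the convergence $R_k(u_m)\to[\lambda(k;\Omega)]^k$, and then a stability/rigidity step. Your compactness argument for upgrading ``deficit $\to 0$ plus geometric mean $\to G$'' to ``each $\lambda_{k,i}\to G$'' is a workable substitute for the paper's quantitative G\r{a}rding inequality (Lemma \ref{Glem}) combined with Cauchy--Schwarz and the local $L^1$ bound (\ref{L1bdk}); note only that you do not need $G>0$ a.e.\ (which you assert without proof), since on $\{G=0\}$ the nonnegativity of the $\lambda_{k,i}$ and the convergence of their sum to $0$ already give the conclusion. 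Likewise, your appeal to a uniform $L^\infty$ bound on $u_m$ via ``Aleksandrov--Trudinger type estimates'' is not available for general $2\le k<n$ (the paper only has it for $k=n$), but that step can be replaced by the sandwich $\int|u_{m+1}||w|^k\,dx\ge \lambda^{-1}\int|w|^k(\,\cdot\,)^{1/k}dx\ge \lambda^{-1}R_k(u_m)^{1/k}\int|u_m||w|^k\,dx$ together with the monotone limit (\ref{Llim}), so it is a repairable blemish rather than a fatal one.

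The genuine gap is in your derivation of the pointwise convergence (\ref{conpt}). You extract a pointwise a.e.\ convergent subsequence of $D^2u_{m_j+1}$ by invoking ``interior $C^2$-estimates of Pogorelov/Chou--Wang type for $S_k(D^2u)=f$ with bounded positive $f$,'' claiming $\|D^2u_{m+1}\|_{L^\infty(K)}\le C_K$ uniformly in $m$. No such estimate is available: pure interior $C^2$ estimates for the $k$-Hessian equation with only $L^\infty$ control of the right-hand side fail already for $k=n$ (Pogorelov's example), and the known Pogorelov-type estimates require quantitative control of derivatives of $f$, which here would involve $D^2u_m$ and make the argument circular. The paper avoids this entirely: once $\lambda_{k,i}(D^2w(x),D^2u_{m_j+1}(x))\to G(x)$ for every $i$ pointwise a.e., the affine relation (\ref{lameq}) gives $\lambda_{k,i}\bigl(D^2w(x),\,D^2u_{m_j+1}(x)-G(x)D^2w(x)\bigr)\to 0$, and then Lemma \ref{lambm} together with the triviality of the edge $E_{A}(P_k)=\{0\}$ for $k\ge2$ (Lemma \ref{E0lem}) yields $D^2u_{m_j+1}(x)-G(x)D^2w(x)\to 0$ directly; the required boundedness of the matrices is a consequence of the boundedness of all their generalized eigenvalues and the trivial edge (this is exactly the first half of the proof of Lemma \ref{lambm}), not an input. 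You should replace the $C^2$-estimate step by this argument; your subsequent use of the equality case of the Newton--G\r{a}rding inequality then becomes unnecessary.
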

\begin{rem}
\label{remam}
The conclusions of Theorems \ref{conRk} and \ref{W21k} hold if we replace $(m+1)^{-2}$ in the scheme (\ref{kIS}) by $a_m>0$ where $\sum_{m=0}^{\infty} a_m<\infty$. When $k=n$, we can also take $a_m=0$, and in this case, (i) and (iii) of Theorem \ref{conRk} were obtained in \cite{AK} with a different proof.
\end{rem}

We now say a few words about the proofs of Theorems \ref{conRk} and \ref{W21k}. 
 When $k<n$, the lack of convexity of $k$-admissible functions is the main difficulty in the proof of Theorem \ref{conRk}. 
 Our approach is based on the following nonlinear integration by parts inequality for the $k$-Hessian operator. 
 
 \begin{prop}[Nonlinear integration by parts inequality for the $k$-Hessian operator] 
 \label{ibplem}
  Let $\Omega$ be a bounded, open, smooth domain in $\R^n$. Assume that $\p\Omega$ is uniformly $(k-1)$-convex if $k\geq 2$. Then,
 for $k$-admissible functions $u, v\in C^{1,1}(\overline{\Omega})\cap C^3(\Omega)$ with $u=v=0$ on $\p\Omega$, one has
\begin{equation}
\label{NIBP}
\int_\Omega |v| S_k(D^2 u)dx \geq \int_\Omega |u| [S_k(D^2 u)]^{\frac{k-1}{k}} [S_k(D^2 v)]^{\frac{1}{k}} dx.
\end{equation}
If $k\geq 2$ and  the equality holds in (\ref{NIBP}), then there is a positive, continuous function $\mu$ such that $$D^2 u(x)= \mu(x) D^2 v(x) \quad \text{ for all }x\in \Omega.$$
\end{prop}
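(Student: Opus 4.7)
My plan is to prove (\ref{NIBP}) by combining an integration-by-parts identity for the $k$-Hessian with G\r{a}rding's pointwise inequality for the hyperbolic polynomial $\sigma_k$, and then to deduce the rigidity conclusion from the equality case of G\r{a}rding.

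For the identity, I would work with the Newton-type tensor $S_k^{ij}(D^2 u) := \p S_k(D^2 u)/\p u_{ij}$, which is symmetric and, since $u\in C^3(\Omega)$, divergence-free: $\sum_j D_j S_k^{ij}(D^2 u) = 0$. Using Euler's identity $S_k^{ij}(D^2 u)u_{ij}=kS_k(D^2 u)$, together with $k$-admissibility (which forces $\Delta u>0$, hence $u<0$ in $\Omega$ by the strong maximum principle, and similarly $v<0$) and the vanishing boundary data, two integrations by parts should yield the symmetric identity
\begin{equation}
\label{IBP_planned}
\int_\Omega |v|\,S_k(D^2 u)\,dx \;=\; \frac{1}{k}\int_\Omega |u|\,S_k^{ij}(D^2 u)\,v_{ij}\,dx,
\end{equation}
where the boundary integrals vanish since $u=v=0$ on $\p\Omega$ and the remaining unwanted volume terms are killed by the divergence-free property of $S_k^{ij}$.

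Recognizing the right-hand integrand as the mixed $k$-Hessian $\sigma_k(D^2 u,\dots,D^2 u,D^2 v)$, obtained by polarizing $\sigma_k$ with $k-1$ copies of $D^2 u$ and one copy of $D^2v$, I would then apply G\r{a}rding's inequality for the hyperbolic polynomial $\sigma_k$ with G\r{a}rding cone $\Gamma_k$ (to be reviewed in Section \ref{Hypersect}) to obtain the pointwise bound
$$
\sigma_k(D^2 u,\dots,D^2u,D^2v)\;\geq\;[S_k(D^2 u)]^{(k-1)/k}[S_k(D^2 v)]^{1/k}.
$$
Multiplying this inequality by $|u|\geq 0$ and substituting into (\ref{IBP_planned}) delivers (\ref{NIBP}).

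Finally, for the rigidity conclusion when $k\geq 2$, equality in (\ref{NIBP}) combined with $|u|>0$ throughout $\Omega$ (again by strict subharmonicity) forces pointwise equality in the above G\r{a}rding inequality almost everywhere, and hence everywhere in $\Omega$ by the continuity of $D^2 u$ and $D^2 v$. Invoking the equality case of G\r{a}rding's inequality for $\sigma_k$ with $k\geq 2$, which forces the two matrix arguments to be positively proportional, yields $D^2 u(x)=\mu(x)D^2 v(x)$ with $\mu(x)=[S_k(D^2 u)(x)/S_k(D^2 v)(x)]^{1/k}>0$, continuous on $\Omega$. The main obstacle is precisely this rigidity statement: for $k=n$ it is the classical Alexandrov rigidity for mixed discriminants, but for $2\leq k\leq n-1$ it demands the more delicate equality characterization in G\r{a}rding's inequality for hyperbolic polynomials, which is exactly why the paper devotes a preliminary section to the hyperbolic-polynomial machinery.
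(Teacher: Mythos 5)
Your proposal follows essentially the same route as the paper: the double integration by parts using the divergence-free structure of $S_k^{ij}(D^2u)$ and Euler's identity, then the pointwise G\r{a}rding inequality for the hyperbolic polynomial $P_k$ (your polarized form $\sigma_k(D^2u,\dots,D^2u,D^2v)$ is exactly $\tfrac{1}{k}(P_k)'_{D^2v}(D^2u)$), and finally the equality case of G\r{a}rding for the rigidity statement. The only detail you leave implicit — why equality forces the two Hessians to be proportional rather than merely differing by an element of the edge — is resolved in the paper by showing that the edge of $P_k$ is trivial for $k\geq 2$ (Lemma \ref{E0lem}), which is precisely the hyperbolic-polynomial input you anticipated.
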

 The Monge-Amp\`ere case of (\ref{NIBP}), that is, when $k=n$ and $u$ and $v$ are convex, was established in \cite{LSNS} under more relaxed conditions on $u, v$ and $\Omega$. 
 
 We will prove Proposition \ref{ibplem}, and its extensions, using G\r{a}rding's inequality \cite{Garding} for  hyperbolic polynomials of which $\sigma_k$'s and $S_k$'s (viewed as functions of matrices) are examples.  
    
 For the proof of Theorem \ref{W21k}, we find that quantitative forms of (\ref{NIBP}) whose defects measure certain closeness of $D^2 u$ to $D^2 v$  guarantee  the interior $W^{2,1}$ convergence of $u_m$ to $u_{\infty}$. They are proved using 
quantitative G\r{a}rding's inequalities for  hyperbolic polynomials; see Lemma \ref{Glem}.

\begin{rem}
When $k=1$, (\ref{NIBP}) becomes an equality and it is an integration by parts formula. If we just require that $\lambda (D^2 u), \lambda (D^2 v)\in \overline{\Gamma_k}$ instead of $\lambda (D^2 u), \lambda (D^2 v)\in \Gamma_k$, then (\ref{NIBP}) still holds.
To see this, take a $k$-admissible function $w\in  C^{1,1}(\overline{\Omega})\cap C^3(\Omega)$ with $w=0$ on $\p\Omega$. Then, we apply the current version of (\ref{NIBP}) to $u+\e w$ and $v+\e w$ and then let $\e\rightarrow 0$.
\end{rem}

The rest of the paper is organized as follows. In Section \ref{Hypersect}, we recall some basics of hyperbolic polynomials and G\r{a}rding's inequality. In Section \ref{ibpksect}, we prove Proposition \ref{ibplem} and its extensions to other hyperbolic polynomials. In Section \ref{dual_sect}, we prove Theorem \ref{kHessL}. In Section \ref{conRksect}, we prove Theorem \ref{conRk}.
The proof of Theorem \ref{W21k} will be given in Section \ref{W21sect}.
 \section{Hyperbolic polynomials }
 \label{Hypersect}
 In this section, we recall some basics of hyperbolic polynomials and G\r{a}rding's inequality. See also Harvey-Lawson \cite{HLn} for a simple and self-contained account of G\r{a}rding's theory of hyperbolic polynomials \cite{Garding}.

 Suppose that $p$ is a homogenous real polynomial of degree $k$ on $\R^N$. Given $a\in \R^N$, we say that $p$ is {\it $a$-hyperbolic} if $p(a)>0$ and for each $x\in\R^N,$ $p(ta + x)$ can be factored as
 $$p(ta + x) =p(a) \prod_{i=1}^k (t + \lambda_i(p; a,x))\quad \text{for all } t\in \R$$
 where $\lambda_i(p; a, x)$'s ($i=1, \cdots, k$) are real numbers. The functions $\lambda_i(p; a, x)$ are called the {\it $a$-eigenvalues of $x$}, and they are well-defined up to permutation. In what follows, identities between $\lambda_i(p;\cdot, \cdot)$ are understood modulo the permutation group $\mathcal{S}_k$ of order $k$. 
 
 For reader's convenience, we mention here some examples of $a$-hyperbolic polynomials, mostly taken from \cite{Garding}. The polynomials $P_k$'s in Example \ref{Ex3} are most relevant for the results of this paper.

 \begin{exam}
 The quadratic polynomial $$p(x)= x_1^2-x_2^2-\cdots-x^2_N,\quad x=(x_1, \cdots, x_N)\in \R^N$$
 is $e_1$-hyperbolic where $e_1=(1,0,\cdots,0)$. The $e_1$-eigenvalues of $x\in\R^N$ are given by
 $$\displaystyle \left\{\lambda_1(p; e_1, x), \lambda_2(p; e_1, x)\right\}=\left\{x_1\pm \sqrt{|x|^2-x_1^2}\right\}.$$
 \end{exam}
 
  \begin{exam}
  The polynomial
 $$p(x) =\prod_{i=1}^N x_i,\quad x=(x_1, \cdots, x_N)\in \R^N$$
 is $a$-hyperbolic for any $a\in\R^N$ with $p(a)>0$. The $a$-eigenvalues of $x\in\R^N$ are given by
  $$\left\{\lambda_i(p; a, x), i=1,\cdots, N\right\}=\left\{x_i/a_i, i=1,\cdots, N\right\}.$$
  \end{exam}

Suppose $p$ is $a$-hyperbolic.
Observe from the definition of $a$-eigenvalues of $x$ that
 \begin{equation}
 \label{prodlam}
 \frac{p(x)}{p(a)}=\prod_{i=1}^k \lambda_i (p; a, x).
 \end{equation}
 Denote
 \begin{equation}
 \label{dpxa}
 p'_x(a) =\frac{d}{dt}\mid_{t=0} p(a+ tx).
 \end{equation}
 Then
 \begin{equation}
 \label{sumlam}
 \frac{p'_x(a)}{p(a)}= \sum_{i=1}^k \lambda_i (p; a, x). 
 \end{equation}
 Note that
 \begin{equation}
 \label{lameq}
 \lambda_i(p; a, a)=1;~\lambda_i(p; a, tx)= t\lambda_i(p; a, x) \text{ mod } \mathcal{S}_k,~\lambda_i(p; a, ta+ x)= t+ \lambda_i(p; a, x) \text{ mod } \mathcal{S}_k.
 \end{equation}
  If $p$ is be $a$-hyperbolic, then we denote its {\it edge at $a$} by
 $$E_a(p)=\{x\in\R^N: \lambda_1(p; a, x)=\cdots=\lambda_k(p; a, x)=0\}.$$
 We have 
 \begin{equation}
 \label{Eax}
 \lambda_i(p; a, x)=\mu  \quad \text{for all i }\Longleftrightarrow \lambda_i(p; a, x-\mu a)=0  \quad \text{for all i } \Longleftrightarrow x-\mu a\in E_a(p).
 \end{equation}
 The G\r{a}rding cone of $p$ at $a$ is defined to be
 $$\Gamma_a(p)=\{x\in \R^N: \lambda_i(p; a, x)>0\text{ for all } i=1,\cdots, k\}.$$
 A fundamental result of G\r{a}rding \cite[Theorem 2]{Garding} states that if $p$ is $a$-hyperbolic and $b\in \Gamma_a(p)$, then $p$ is $b$-hyperbolic and $\Gamma_b(p)= \Gamma_a(p)$. Therefore, we use $\Gamma(p)$ to denote $\Gamma_a(p)$ whenever $p$ is $a$-hyperbolic.  Another fundamental result of G\r{a}rding \cite[Theorem 3]{Garding}  says that the edge $E_a(p)$ of a hyperbolic polynomial $p$ at $a$ is equal to the {\it linearity $L(p)$ of $p$} where
 $$L(p)=\{x\in\R^N: p(tx + y)= p(y)\quad\text{for all } t\in \R \quad\text{and } y\in\R^N\}.$$ 
 For later reference, we summarize these results in the following theorem.
 \begin{thm}[G\r{a}rding] 
 \label{G59thm}
 Let $p$ be hyperbolic at $a\in\R^N$. Then
 \begin{myindentpar}{1cm}
 (i) If $b\in \Gamma_a(p)$, then $p$ is $b$-hyperbolic and $\Gamma_b(p)= \Gamma_a(p)$.\\
 (ii) $E_a(p)= L(p).$
 \end{myindentpar}
 \end{thm}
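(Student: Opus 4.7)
For part (i), the positivity $p(b)>0$ is immediate from $p(b)=p(a)\prod_{i=1}^k \lambda_i(p;a,b)$ with each factor positive. To show $b$-hyperbolicity, I fix an arbitrary $y\in\R^N$ and study the bivariate polynomial $P(s,t):=p(y+sa+tb)$, of total degree $\leq k$. Its degree-$k$ homogeneous part equals $p(sa+tb)=p(a)\prod_i(s+t\lambda_i(a,b))$, so the leading coefficients of $P$ in $s$ and in $t$ are $p(a)$ and $p(b)$, both nonzero. By $a$-hyperbolicity, for each fixed $t$, $P(\cdot,t)=p(a)\prod_i(s-s_i(t))$ with real branches $s_i(t)$ varying continuously in $t$; the substitution $s=t\sigma$ shows $s_i(t)/t\to -\lambda_i(a,b)<0$ as $|t|\to\infty$, so each branch runs from $+\infty$ to $-\infty$ and crosses every horizontal line $\{s=s_0\}$ at least once. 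Since $\deg_t P(s_0,\cdot)=k$ with nonzero leading coefficient $p(b)$, the count is exact and $P(s_0,\cdot)$ has $k$ real $t$-roots. Specializing to $s_0=0$ gives the $b$-hyperbolicity of $p$. The identity $\Gamma_b(p)=\Gamma_a(p)$ then follows from a continuity argument: both are open convex cones meeting along the segment $[a,b]$ on which none of the $\lambda_i(p;\cdot,\cdot)$ vanish, and each equals the connected component of its base point in the open set where $p$ is hyperbolic with positive eigenvalues.

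For part (ii), the inclusion $L(p)\subseteq E_a(p)$ is direct: if $x\in L(p)$, then $p(a+tx)=p(a)$ for all $t$, and the product formula $p(a+tx)=p(a)\prod_i(1+t\lambda_i(a,x))$ forces each $\lambda_i(a,x)=0$. For the reverse $E_a(p)\subseteq L(p)$, I plan to induct on $k$, the case $k=1$ being trivial. For the inductive step, given $x_0\in E_a(p)$, consider the directional derivative $p'_a(x):=\frac{d}{dt}\mid_{t=0}\! p(x+ta)$, a polynomial of degree $k-1$. Rolle's theorem applied to $t\mapsto p(ta+x)=p(a)\prod_i(t+\lambda_i(a,x))$ shows that $p'_a$ is $a$-hyperbolic of degree $k-1$ with $p'_a(a)=k\,p(a)>0$ (by Euler's identity). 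Since $\lambda_i(p;a,x_0)=0$ for all $i$, differentiating $p(ta+x_0)=p(a)t^k$ yields $p'_a(ta+x_0)=kp(a)t^{k-1}$, so $x_0\in E_a(p'_a)$. By the induction hypothesis, $x_0\in L(p'_a)$, and consequently the polynomial $h(y):=p(y+x_0)-p(y)$ satisfies $\partial_a h\equiv 0$. Repeating this in every direction $b\in\Gamma_a(p)$ (using part (i) to ensure $p$ is $b$-hyperbolic) yields $\partial_b h\equiv 0$ for $b$ ranging over the open cone $\Gamma_a(p)$, which spans $\R^N$; hence $h$ is constant, and $h(0)=p(x_0)-p(0)=0$ (since $p(x_0)=p(a)\prod_i\lambda_i(a,x_0)=0$) forces $h\equiv 0$, i.e., $x_0\in L(p)$.

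The main obstacle is the last step: for the induction to apply to $p'_b$ one needs $x_0\in E_b(p)$ for every $b\in\Gamma_a(p)$, which is the direction-independence of the edge within the G\r{a}rding cone---essentially equivalent to the statement being proved. To sidestep the circularity, I would establish this separately via a Puiseux analysis of the real branches $t\mapsto \lambda_i(a,tb+x_0)$ near $t=0$: the vanishing $\lambda_i(a,x_0)=0$ forces each real branch to vanish at $t=0$, and then the identity $p(tb+x_0)=p(a)\prod_i\lambda_i(a,tb+x_0)$ together with the known leading coefficient $p(b)$ forces $p(tb+x_0)=p(b)t^k$, i.e., $x_0\in E_b(p)$. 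Once this direction-independence is in hand, the induction closes cleanly.
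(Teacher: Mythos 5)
First, a point of context: the paper does not prove this theorem at all --- it is stated purely as a citation of G\r{a}rding's Theorems 2 and 3 (with \cite{HLn} as a modern exposition), so there is no in-paper argument to compare yours against. Judged on its own terms, your skeleton is reasonable (the Rolle/derivative-polynomial induction for (ii) is a genuinely classical device), but the two load-bearing steps are not established. In part (i), the step ``the count is exact'' is a gap: the intermediate value theorem gives you, for each of the $k$ ordered branches $s_i(t)$, at least one $t$ with $s_i(t)=s_0$, but several branches may cross the line $s=s_0$ at the same parameter $t_0$, and you cannot conclude that the multiplicity of $t_0$ as a root of $P(s_0,\cdot)$ equals the number of branches through $(s_0,t_0)$ --- the transfer of multiplicity between the $s$- and $t$-directions fails in general (compare $P(s,t)=s^2-t$ at the origin). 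The standard repairs are either G\r{a}rding's complex-analytic homotopy (show the roots of $\tau\mapsto p(x+isa+\tau b)$ stay in the open lower half-plane for $s>0$ because they can never cross the real axis by $a$-hyperbolicity, then let $s\to0^{\pm}$) or a perturbation to simple roots followed by a closedness argument; you supply neither. Moreover, your justification of $\Gamma_b(p)=\Gamma_a(p)$ invokes convexity of the cone, which in the standard development is a \emph{consequence} of part (i) (once $\Gamma_a=\Gamma_b$ for all $b\in\Gamma_a$, the cone is star-shaped about each of its points, hence convex), so as written this is circular. The connected-component idea is the right one, but the correct statement is that $\Gamma_a(p)$ is the connected component of $a$ in $\{p\neq0\}$, and that identification itself requires a short argument (openness, star-shapedness about $a$, and relative closedness via $\prod_i\lambda_i(p;a,x)=p(x)/p(a)\neq0$).

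In part (ii), you correctly isolate the crux --- the direction-independence $E_a(p)\subseteq E_b(p)$ for $b\in\Gamma_a(p)$ --- but the proposed Puiseux fix does not deliver it. Knowing that each branch $t\mapsto\lambda_i(p;a,tb+x_0)$ vanishes at $t=0$ only tells you that the product $p(tb+x_0)=p(a)\prod_i\lambda_i(p;a,tb+x_0)$ vanishes to \emph{some} positive order at $t=0$; to conclude $p(tb+x_0)=p(b)t^k$ you need vanishing to order $k$, i.e.\ that each branch vanishes to order at least $1$, and the Puiseux exponents are a priori only positive rationals. What actually closes this step is the realness of all branches for \emph{both} signs of $t$ together with symmetric-function identities: one has $e_1(\lambda(t))=\gamma_1 t$ exactly (since $p'_{x_0}(a)=0$), and then $2e_2(\lambda(t))=e_1(\lambda(t))^2-\sum_i\lambda_i(t)^2\le\gamma_1^2t^2$ forces the linear term of the polynomial $e_2(\lambda(t))$ to vanish, whence $\sum_i\lambda_i(t)^2=O(t^2)$, $\max_i|\lambda_i(t)|=O(|t|)$, and therefore $e_j(\lambda(t))=c_jt^j$ for every $j$ because a degree-$\le j$ polynomial that is $O(t^j)$ is a monomial. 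With that lemma in hand your induction does close, but as submitted the argument stops exactly at the point you yourself flag as ``the main obstacle,'' so the proof is incomplete.
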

 From (\ref{prodlam}) and (\ref{sumlam}), 
 we obtain the following quantitative G\r{a}rding's inequality.
 \begin{lem}[Quantitative G\r{a}rding's inequality] 
 \label{Glem}
  Suppose $p$ is a homogenous real polynomial of degree $k$ on $\R^N$ and 
 $p$ is $a$-hyperbolic. If $x\in \Gamma(p)$, then
 \begin{equation}
 \label{QGar}
\frac{1}{k} \frac{p'_x(a)}{p(a)} \geq \left(\frac{p(x)}{p(a)}\right)^{1/k} + \frac{1}{k}\sum_{i=1}^k \left[\sqrt{ \lambda_i(p; a, x)}-  \left(\frac{p(x)}{p(a)}\right)^{\frac{1}{2k}} \right]^2. 
 \end{equation}
 In particular, if $k\geq 2$ and $x\in\Gamma(p)$ with 
 \begin{equation}
 \label{Eaxmu}
 \frac{1}{k} \frac{p'_x(a)}{p(a)} = \left(\frac{p(x)}{p(a)}\right)^{1/k}, 
 \end{equation}
 then there is a positive constant $\mu$ such that $x-\mu a\in E_a(p).$
 \end{lem}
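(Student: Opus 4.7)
The plan is to reduce (\ref{QGar}) to the AM--GM inequality applied to the square roots of the $a$-eigenvalues of $x$. Abbreviate $\lambda_i := \lambda_i(p; a, x)$, and set
$$A := \frac{1}{k}\sum_{i=1}^k \lambda_i, \qquad G := \Bigl(\prod_{i=1}^k \lambda_i\Bigr)^{1/k}.$$
By (\ref{prodlam}) and (\ref{sumlam}) we have $\frac{p'_x(a)}{p(a)}/k = A$ and $\left(\frac{p(x)}{p(a)}\right)^{1/k}=G$, so (\ref{QGar}) reads
$$A \;\geq\; G + \frac{1}{k}\sum_{i=1}^k \bigl(\sqrt{\lambda_i}-G^{1/2}\bigr)^2.$$
Since $x\in \Gamma(p)$, each $\lambda_i>0$ and the square roots are well defined.

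First, I would expand the squares on the right-hand side, obtaining
$$G + \frac{1}{k}\sum_{i=1}^k \lambda_i \;-\; 2G^{1/2}\cdot\frac{1}{k}\sum_{i=1}^k\sqrt{\lambda_i}\; +\; G \;=\; A \;+\; 2G^{1/2}\Bigl(G^{1/2} - \frac{1}{k}\sum_{i=1}^k\sqrt{\lambda_i}\Bigr),$$
so the desired inequality is equivalent to
$$\frac{1}{k}\sum_{i=1}^k\sqrt{\lambda_i} \;\geq\; G^{1/2} \;=\; \Bigl(\prod_{i=1}^k \sqrt{\lambda_i}\Bigr)^{1/k},$$
which is exactly the AM--GM inequality for the positive numbers $\sqrt{\lambda_1},\ldots,\sqrt{\lambda_k}$. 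This step completes the proof of (\ref{QGar}).

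For the equality statement, suppose $k\geq 2$, $x\in\Gamma(p)$, and (\ref{Eaxmu}) holds, i.e. $A=G$. Then the error term in (\ref{QGar}) must vanish, which forces $\sqrt{\lambda_i}=G^{1/2}$, hence $\lambda_i=G$ for every $i$. (Equivalently, equality in AM--GM applied above forces all $\sqrt{\lambda_i}$ equal.) Setting $\mu := G > 0$, all $a$-eigenvalues of $x$ are equal to $\mu$, and by the characterization (\ref{Eax}) we conclude $x-\mu a \in E_a(p)$, as desired. No delicate point arises here; the only thing to verify carefully is the algebraic expansion that reduces (\ref{QGar}) to AM--GM on $\{\sqrt{\lambda_i}\}$, so there is no substantial obstacle.
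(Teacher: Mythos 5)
Your proof is correct and follows essentially the same route as the paper: the algebraic expansion you perform is precisely the paper's identity reducing (\ref{QGar}) to the AM--GM inequality for $\sqrt{\lambda_1(p;a,x)},\ldots,\sqrt{\lambda_k(p;a,x)}$, combined with (\ref{prodlam}), (\ref{sumlam}), and the rigidity conclusion via (\ref{Eax}). No issues.
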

  \begin{proof}
  Without the last nonnegative term, (\ref{QGar}) is the original G\r{a}rding's inequality whose proof uses (\ref{prodlam}), (\ref{sumlam}) and  the Cauchy inequality for $k$ positive numbers.

For the full version of (\ref{QGar}), we use (\ref{prodlam}), (\ref{sumlam})
and 
following quantitative version of Cauchy's inequality: If $x_1, \cdots, x_k$ are $k$ ($k\geq 2$) nonnegative numbers, 
then
\begin{multline}
\label{RCS}
\frac{1}{k}\sum_{i=1}^k x_i-(x_1\cdots x_k)^{\frac{1}{k}}-\frac{1}{k}\sum_{i=1}^k (\sqrt{x_i}-(x_1\cdots x_k)^{\frac{1}{2k}})^2 \\= 2 (x_1\cdots x_k)^{\frac{1}{2k}}\left(\frac{1}{k}\sum_{i=1}^k \sqrt{x_i}-(x_1\cdots x_k)^{\frac{1}{2k}}\right)\geq 0.
\end{multline}
Clearly, (\ref{QGar}) follows from (\ref{RCS}) applied to $\lambda_i(p; a,x)$. Moreover, if $k\geq 2$, and (\ref{Eaxmu}) holds, then we must have $\lambda_i(p; a, x)=\cdots= \lambda_k(p; a, x)=\mu$ for some positive constant $\mu$. Hence, the last assertion follows from (\ref{Eax}).
 \end{proof}
 \begin{exam}
 \label{Ex3}
 Let $N= \frac{1}{2}n(n+1)$ and let $A$ be a symmetric $n\times n$ matrix $A= (a_{ij})$. We can view $A$ as a point in $\R^N$. Then $P(A)=\det A$ is $A$-hyperbolic for any positive definite matrix $A$. Let $I_n$ be the identity $n\times n$ matrix. Define $P_k$ by
\begin{equation}
\label{PPk}
\det (tI_n + A)=P(t I_n + A)= \sum_{k=0}^n t^{n-k} P_k(A) \quad\text{for all } t\in\R.
\end{equation}
Then  $P_k$ is a homogenous polynomial of degree $k$ on $\R^N$; moreover, $P_k$ is $I_n$-hyperbolic (see, Example 3 and the discussion at the end of p. 959 in \cite{Garding}). 
\end{exam}

From now on, let $P_k$ be as in Example \ref{Ex3}. From this example, we know that $P_k$ is $I_n$-hyperbolic. Thus, for any symmetric $n\times n$ matrix $A$, we have from the definition of $I_n$-hyperbolicity that the $I_n$-eigenvalues $\lambda_i (P_k; I_n, A)$ are real numbers, for all $i=1,\cdots, k$.
\vglue 0.1cm
Suppose furthermore that $A$ is a symmetric $n\times n$ matrix with $\lambda(A)\in\Gamma_k$ (as defined in (\ref{Gak})). Then, from $\lambda_i (P_k; I_n, A)\in\R$, 
$$P_k(tI_n + A)=\sum_{i=0}^k {n-i\choose k-i}t^{k-i}\sigma_i(\lambda(A))= P_k(I_n)\prod_{i=1}^k (t+ \lambda_i (P_k; I_n, A))$$
and $\sigma_i(\lambda(A))>0$ for all $i$, we easily find that $\lambda_i (P_k; I_n, A)>0$ for all $i=1,\cdots, k$. Hence $A\in \Gamma(P_k)$ from which we deduce that $P_k$ is $A$-hyperbolic by Theorem \ref{G59thm}. Recall that we use $\Gamma(P_k)$ to denote the G\r{a}rding cone of $P_k$ at $I_n$. 
Vice versa, if $A\in \Gamma(P_k)$, then by definition, $\lambda_i (P_k; I_n, A)>0$ for all $i=1,\cdots, k$ and therefore, $\sigma_i(\lambda(A))>0$ for all $i=1,\cdots, k$ which show that $\lambda(A)\in \Gamma_k$.
Thus, we have
\begin{equation}
\label{GGk}
\Gamma(P_k)=\{A\in \R^N: \lambda(A)\in \Gamma_k\}.
\end{equation}
The following lemma shows the triviality of  the edge of $P_k$ when $k\geq 2$.
\begin{lem}
\label{E0lem}
 If $k\geq 2$, then 
 \begin{equation}
 \label{Eaxpk}
 E_{A_0}(P_k)=\{0\} \quad\text{ whenever }P_k\text{ is }A_0-\text{hyperbolic}.
 \end{equation}
 \end{lem}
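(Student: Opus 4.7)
The plan is to reduce the statement to a single intrinsic object and then compute. By Theorem 2.1(ii), for any $A_0$ at which $P_k$ is hyperbolic one has $E_{A_0}(P_k) = L(P_k)$, where the linearity $L(P_k)$ is defined without reference to the hyperbolicity point. Hence it suffices to show $L(P_k) = \{0\}$. Since $P_k$ is $I_n$-hyperbolic (Example 2.3), the most convenient test direction will be $Y = I_n$.

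Suppose $X \in L(P_k)$. I would specialize the definition to $Y = I_n$, giving the polynomial identity $P_k(tX + I_n) = P_k(I_n) = \binom{n}{k}$ for every $t \in \R$. To expand the left-hand side, I would use $P_k(A) = \sigma_k(\lambda(A))$ together with the elementary identity
\[
\sigma_k(1 + t\mu_1, \ldots, 1 + t\mu_n) = \sum_{j=0}^{k} \binom{n-j}{k-j} t^{j} \sigma_j(\mu),
\]
which follows by writing $\sigma_k$ as a sum over $k$-subsets and expanding each factor $(1 + t\mu_i)$. Applied to $\mu = \lambda(X)$, this gives
\[
P_k(tX + I_n) = \sum_{j=0}^{k} \binom{n-j}{k-j}\, t^{j}\, \sigma_j(\lambda(X)).
\]
Matching coefficients of $t^j$ (and using $\binom{n-j}{k-j} > 0$ for $0 \le j \le k$) forces $\sigma_j(\lambda(X)) = 0$ for every $j = 1, \ldots, k$.

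Now I would invoke the hypothesis $k \geq 2$: from $\sigma_1(\lambda(X)) = 0$ and $\sigma_2(\lambda(X)) = 0$ one gets
\[
\sum_{i=1}^{n} \lambda_i(X)^2 = \sigma_1(\lambda(X))^2 - 2\sigma_2(\lambda(X)) = 0,
\]
so every eigenvalue of $X$ is zero. Since $X$ is a symmetric matrix (an element of the space $\R^N$ from Example 2.3), vanishing of all eigenvalues forces $X = 0$. This proves $L(P_k) \subset \{0\}$; the reverse inclusion is trivial, and the conclusion $E_{A_0}(P_k) = \{0\}$ follows.

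I do not expect any serious obstacle: the argument is direct once one reduces to $L(P_k)$ via G\r{a}rding's theorem. The only subtle point is that the deduction $X = 0$ genuinely requires $k \geq 2$, since for $k = 1$ one only obtains $\mathrm{tr}(X) = 0$, and indeed $L(P_1) = \{X : \mathrm{tr}(X)=0\} \neq \{0\}$, which is consistent with the hypothesis being sharp.
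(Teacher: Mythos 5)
Your proof is correct and follows essentially the same route as the paper: reduce to the linearity $L(P_k)$ via G\r{a}rding's theorem, expand $P_k$ along the $I_n$ direction to force $\sigma_j(\lambda(X))=0$ for $j=1,\dots,k$, and use $\sigma_1^2-2\sigma_2=\sum_i\lambda_i(X)^2$ to conclude $X=0$. The only cosmetic difference is that the paper reads off $\sigma_j(\lambda(A))=0$ from the vanishing of the $I_n$-eigenvalues in the factorization of $P_k(tI_n+A)$, whereas you read it off from the constancy of $P_k(tX+I_n)$; these are the same computation.
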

 \begin{proof}
 In the proof, we use Theorem \ref{G59thm} $(ii)$ which implies that the edge $E_a(p)$ of a hyperbolic polynomial $p$ at $a$ does not depend on $a$.
 We apply this fact to $p=P_k$, and deduce that if $P_k$ is $A_0$-hyperbolic then $$E_{A_0}(P_k)=L(P_k)=E_{I_n}(P_k)=\{A\in\R^N: \lambda_1(P_k; I_n, A)=\cdots= \lambda_k(P_k; I_n, A)=0\}.$$ Let $A\in E_{I_n}(P_k)$. 
 Then $\lambda_1(P_k; I_n, A)=\cdots= \lambda_k(P_k; I_n, A)=0$ so the above expansion of $P_k(tI_n + A)$ shows that $\sigma_i(\lambda(A))=0$ for all $i=1,\cdots, k$. In particular, since $k\geq 2$, we find $$\sigma_1(\lambda(A))= \sigma_2(\lambda(A))=0.$$ Therefore, the eigenvalues $\lambda_1(A), \cdots, \lambda_n(A)$ of the symmetric matrix $A$ satisfy
 $$\sum_{i=1}^n [\lambda_i(A)]^2= [\sigma_1(\lambda(A))]^2-2\sigma_2(\lambda(A))=0.$$ It follows that $A$ is the $0$ matrix. This shows that $E_{A_0}(P_k)=E_{I_n}(P_k)=\{0\}$ as claimed.
 \end{proof}
 Note that the conclusion of Lemma \ref{E0lem} is false for $k=1$ since
 $$E_{A_0}(P_k)=L(P_1)=\{A\in \R^N: P_1(A)=\text{trace}(A)=0\}.$$
 We have the following lemma.
 \begin{lem}
 \label{lambm}
 Let $p$ be a homogenous real polynomial of degree $k$ on $\R^N$. Suppose that 
 $p$ is  $a$-hyperbolic with $E_a(p)=\{0\}$. 
 Assume that $\{b^{(m)}\}\subset\R^N$ satisfies $\lambda_i(p; a, b^{(m)})\rightarrow 0$ when $m\rightarrow \infty$ for all $i=1,\cdots, k$.  Then $b^{(m)}\rightarrow 0$ when $m\rightarrow \infty$. 
 \end{lem}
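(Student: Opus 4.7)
The plan is to argue by contradiction, exploiting the homogeneity $\lambda_i(p;a,tx)=t\lambda_i(p;a,x)$ from \eqref{lameq} together with compactness, so that the problem reduces to a continuity statement for the $\lambda_i$'s as functions of $x$. Suppose $b^{(m)} \not\to 0$. After passing to a subsequence we may assume either $\|b^{(m)}\| \geq \delta > 0$ stays bounded below and is uniformly bounded, or $\|b^{(m)}\|\to\infty$. In the second case I replace $b^{(m)}$ by $c^{(m)}:=b^{(m)}/\|b^{(m)}\|$; by the homogeneity of $\lambda_i(p;a,\cdot)$ we still have $\lambda_i(p;a,c^{(m)})=\lambda_i(p;a,b^{(m)})/\|b^{(m)}\|\to 0$, and now $\|c^{(m)}\|=1$. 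So in either case I may assume, after extracting a further subsequence, that $b^{(m)}\to b^\ast$ for some $b^\ast \in \R^N$ with $\|b^\ast\|\geq \delta>0$.

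The heart of the argument is then to show $\lambda_i(p;a,b^\ast)=0$ for all $i$, which would force $b^\ast \in E_a(p)=\{0\}$ and contradict $\|b^\ast\|>0$. The potential obstacle is that the individual functions $x\mapsto \lambda_i(p;a,x)$ need not be continuous because of the permutation ambiguity. I circumvent this by working with the elementary symmetric polynomials: expanding
\begin{equation*}
p(ta+x)=p(a)\prod_{i=1}^k(t+\lambda_i(p;a,x))=p(a)\sum_{j=0}^{k}t^{k-j}\,e_j(\lambda_1(p;a,x),\ldots,\lambda_k(p;a,x))
\end{equation*}
and comparing with the expansion of the polynomial $t\mapsto p(ta+x)$ in $t$, each $e_j(\lambda_1(p;a,x),\ldots,\lambda_k(p;a,x))$ equals $p(a)^{-1}$ times a coefficient of $p(ta+x)$, hence is a polynomial—in particular a continuous—function of $x$. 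The hypothesis $\lambda_i(p;a,b^{(m)})\to 0$ for all $i$ is equivalent to $e_j(\lambda_1(p;a,b^{(m)}),\ldots,\lambda_k(p;a,b^{(m)}))\to 0$ for all $j=1,\ldots,k$, and passing to the limit $b^{(m)}\to b^\ast$ using continuity yields $e_j(\lambda_1(p;a,b^\ast),\ldots,\lambda_k(p;a,b^\ast))=0$ for all $j$. By Vieta's formulas this forces $\lambda_i(p;a,b^\ast)=0$ for all $i$, i.e., $b^\ast\in E_a(p)=\{0\}$, giving the desired contradiction.

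The only mildly delicate point is the continuity argument, and it is handled once and for all by passing through the symmetric functions. The compactness/homogeneity dichotomy handles the unbounded case so that no extra assumption on boundedness of $\{b^{(m)}\}$ is needed. Applied together with Lemma \ref{E0lem}, this lemma will in particular allow us to deduce norm-convergence of Hessians from the convergence of all $I_n$-eigenvalues of $P_k$ to zero, which is precisely what is needed in the applications to the iterative scheme in Section \ref{W21sect}.
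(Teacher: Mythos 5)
Your proof is correct and follows the same skeleton as the paper's: argue by contradiction, normalize by $\|b^{(m)}\|$ in the unbounded case using the homogeneity in (\ref{lameq}), extract a convergent subsequence with nonzero limit $b^\ast$, and conclude $b^\ast\in E_a(p)=\{0\}$. The one genuine difference is how you justify passing the condition ``all $a$-eigenvalues vanish'' to the limit. The paper invokes the continuity of $x\mapsto\lambda_i(p;a,x)$ modulo $\mathcal{S}_k$, citing Harvey--Lawson, which ultimately rests on the continuous dependence of the roots of a polynomial on its coefficients. You instead pass through the elementary symmetric functions $e_j(\lambda_1(p;a,x),\ldots,\lambda_k(p;a,x))$, which are (up to the factor $p(a)^{-1}$) coefficients of $t\mapsto p(ta+x)$ and hence manifestly polynomial in $x$; this needs only the trivial direction (coefficients depend continuously on $x$), sidesteps the permutation ambiguity entirely, and recovers $\lambda_i(p;a,b^\ast)=0$ from $e_j=0$ for all $j\ge 1$ via Vieta. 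This makes your continuity step self-contained where the paper's is cited. One cosmetic point: the word ``equivalent'' in your claim that the hypothesis is equivalent to $e_j\to 0$ is stronger than what you use or need — only the forward implication enters — and in the unbounded case you silently rename $c^{(m)}$ back to $b^{(m)}$; both are harmless but worth tightening in a final write-up.
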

 The lemma is perhaps standard; however, we could not locate a precise reference so we include its proof here. In the proof, we use that $\lambda_i(p; a, x)$, modulo $\mathcal{S}_k$, is continuous in $x$ (see, \cite[p. 1105]{HLn}). This comes from the algebraic fact that roots of a degree k polynomial depend continuously on its coefficients.
 \begin{proof}[Proof of Lemma \ref{lambm}]
 We first show that $b^{(m)}$ is bounded. Suppose that $\|b^{(m)}\|= M_m\rightarrow \infty$. Consider $\tilde b^{(m)} = \frac{b^{(m)}}{M_n}$. Then $\|\tilde b^{(m)}\|=1$ while, modulo $\mathcal{S}_k$, 
$$\lambda_i(p; a, \tilde b^{(m)})= \frac{\lambda_i(p; a, b^{(m)})}{M_n}\rightarrow 0\quad \text{for all } i=1,\cdots, k. $$
Up to extracting a subsequence, we have $\tilde b^{(m)}\rightarrow b$ with $\|b\|=1$ while $\lambda_i(p; a, \tilde b^{(m)})\rightarrow \lambda_i(p; a, b)= 0$ for all $i=1,\cdots, k$. Thus, $b\in E_a(p)$ which shows that $b=0$, a contradiction. 

Next, we show that $b^{(m)}$ converges to $0$. We already known that there is $M>0$ such that $\|b^{(m)}\|\leq M$ for all $m$. Suppose there exists $\delta>0$ such that, there is a subsequence, still denoted $b^{(m)}$, satisfying $M\geq \|b^{(m)}\|\geq \delta>0$. We use compactness as above to get a 
$b$ with $\|b\|=1$ while $\lambda_i(p; a, b)= 0$ for all $i=1,\cdots, k$, a contradiction. 
\end{proof}
\section{Nonlinear integration by parts inequalities}
\label{ibpksect}
In this section, we prove Proposition \ref{ibplem} which is concerned with $P_k$ and its extensions to other hyperbolic polynomials.
\begin{proof}[Proof of Proposition \ref{ibplem}]   Since $u, v\in C^{1,1}(\overline{\Omega})\cap C^3(\Omega)$ are $k$-admissible functions with $u=v=0$ on $\p\Omega$, we have $u, v\leq 0$ in $\Omega$. We view $S_k$ as a function on $n\times n$ matrices $r=(r_{ij})_{1\leq i, j\leq n}$ where
$$S_k(r) =\sigma_k (\lambda(r)).$$
Let
$$S_k^{ij}(D^2 u)=\frac{\p}{\p r_{ij}} S_k(D^2 u).$$
Then, it is well-known that (see, for example, \cite{Re, W1, W2})
$$S_k(D^2 u)=\frac{1}{k} \sum_{i, j=1}^n S_k^{ij}(D^2 u) D_{ij} u$$
and, for each $i=1,\cdots, n$, we have the following divergence-free property of the matrix $(S_k^{ij}(D^2 u))$:
$$\sum_{j=1}^n D_j S_k^{ij}(D^2 u)=0.$$
Therefore, integrating by parts twice, we get
\begin{eqnarray}
\label{ibpuv}
\int_\Omega |v| S_k(D^2 u)dx &=&  \frac{1}{k}  \int_{\Omega}\sum_{i, j=1}^n (-v)  S_k^{ij}(D^2 u) D_{ij} u~dx\nonumber\\&=&
 \frac{1}{k}  \int_{\Omega} \sum_{i, j=1}^n  D_j[v S_k^{ij}(D^2 u)] D_{i} u~dx= \frac{1}{k}  \int_{\Omega} \sum_{i, j=1}^n  D_j v S_k^{ij}(D^2 u) D_{i} u~dx\nonumber\\&=&
 \frac{1}{k}  \int_\Omega (-u) \sum_{i, j=1}^n S_k^{ij}(D^2 u) D_{ij} v dx=  \frac{1}{k} \int_\Omega |u| \sum_{i, j=1}^n S_k^{ij}(D^2 u) D_{ij}v dx.
\end{eqnarray}
We need to show that
\begin{equation}
\label{keyG}
 \frac{1}{k}  \sum_{i, j=1}^n S_k^{ij}(D^2 u) D_{ij} v\geq  [S_k(D^2 u)]^{\frac{k-1}{k}} [S_k(D^2 v)]^{\frac{1}{k}}.
\end{equation}
Let $P_k$ be as in (\ref{PPk}). We use the notation $p'_x (a)$ as defined by (\ref{dpxa}).
 Note that, for $C^2$ functions $u$ and $v$, we have
$$S_k(D^2 u)= P_k (D^2 u),\quad \text{and } (P_k)^{'}_{D^2 v}(D^2 u)= \sum_{i, j=1}^n S_k^{ij}(D^2 u) D_{ij} v.$$ 
Since $u$ and $v$ are $k$-admissible, we have
$$D^2 u, D^2 v\in \Gamma(P_k).$$ 
Thus, by G\r{a}rding's inequality (Lemma \ref{Glem}), 
$$ \frac{1}{k} \sum_{i, j=1}^n S_k^{ij}(D^2 u) D_{ij} v =  \frac{1}{k}(P_k)^{'}_{D^2 v}(D^2 u) \geq P_k(D^2 u)\left( \frac{P_k(D^2 v)}{P_k(D^2 u)}\right)^{1/k} = [P_k(D^2 u)]^{\frac{k-1}{k}} [P_k(D^2 v)]^{\frac{1}{k}}.$$
Therefore, (\ref{keyG}) holds and we obtain (\ref{NIBP}).

If $k\geq 2$ and  the equality holds in (\ref{NIBP}), then (\ref{keyG}) must be an equality for almost all $x\in\Omega$. For those $x$, using the last assertion of Lemma \ref{Glem}, we can find a positive number $\mu(x)$ such that
$$D^2 u(x)-\mu(x) D^2 v(x) \in E_{D^2 u(x)}(P_k)=\{0\}$$
where we used (\ref{Eaxpk}) in the last equality. Since $u, v\in C^3(\Omega)$, $\mu$ is a continuous function on $\Omega$
and $$D^2 u(x)= \mu(x) D^2 v(x) \quad \text{ for all }x\in \Omega.$$
The proof of the proposition is complete.
\end{proof}
A particular consequence of Proposition \ref{ibplem} is the following corollary. 
\begin{cor}
 \label{ibpcor}
  Let $\Omega$ be a bounded, open, smooth, uniformly $(k-1)$-convex (if $k\geq 2$) domain in $\R^n$. Let  $w\in C^{1,1}(\overline{\Omega})\cap C^{\infty}(\Omega)$ be a $k$-Hessian eigenfunction as in (\ref{kEVP_eq}).
  Then for any $k$-admissible function $v\in C^{1,1}(\overline{\Omega})\cap C^3(\Omega)$ with $v=0$ on $\p\Omega$, one has
\begin{equation}
\label{NIBPcor}
\lambda(k;\Omega)\int_\Omega |v| |w|^kdx \geq \int_\Omega |w|^k [S_k(D^2 v)]^{\frac{1}{k}} dx.
\end{equation}
\end{cor}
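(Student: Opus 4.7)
The corollary should follow by a direct application of Proposition \ref{ibplem} with the specific choice $u = w$, the $k$-Hessian eigenfunction. The plan is to first verify that Proposition \ref{ibplem} applies to the pair $(w, v)$: the eigenfunction satisfies $w \in C^{1,1}(\overline{\Omega}) \cap C^{\infty}(\Omega)$ (hence in particular is $C^{1,1}(\overline{\Omega}) \cap C^3(\Omega)$), is $k$-admissible, and vanishes on $\partial\Omega$; the test function $v$ has exactly the required regularity by hypothesis; and the domain is uniformly $(k-1)$-convex when $k \geq 2$ as assumed.

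With the hypotheses verified, I would substitute $u = w$ into \eqref{NIBP} and then use the eigenvalue equation \eqref{kEVP_eq}, namely $S_k(D^2 w) = [\lambda(k;\Omega)]^k |w|^k$, on both sides. On the left-hand side of \eqref{NIBP}, this gives $[\lambda(k;\Omega)]^k \int_\Omega |v|\,|w|^k\,dx$. On the right-hand side, we compute
\[
[S_k(D^2 w)]^{\frac{k-1}{k}} = [\lambda(k;\Omega)]^{k-1}\,|w|^{k-1},
\]
so the right-hand side becomes
\[
\int_\Omega |w|\cdot [\lambda(k;\Omega)]^{k-1}|w|^{k-1}\,[S_k(D^2 v)]^{\frac{1}{k}}\,dx = [\lambda(k;\Omega)]^{k-1}\int_\Omega |w|^k [S_k(D^2 v)]^{\frac{1}{k}}\,dx.
\]

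The final step is to divide both sides of the resulting inequality by $[\lambda(k;\Omega)]^{k-1}$, which is legitimate because $\lambda(k;\Omega) > 0$ (as follows from Wang's characterization \eqref{klamR} or \eqref{Wscheme}). This yields \eqref{NIBPcor} directly. There is essentially no obstacle here beyond checking the regularity hypotheses of Proposition \ref{ibplem}; the corollary is a clean specialization of the proposition in which the eigenvalue equation converts the nonlinear factor $[S_k(D^2 u)]^{(k-1)/k}$ into a power of $|w|$ that combines with the factor $|u| = |w|$ to produce the weight $|w|^k$.
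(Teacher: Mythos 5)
Your proposal is correct and is exactly the argument the paper intends: substitute $u=w$ into Proposition \ref{ibplem}, use the eigenvalue equation $S_k(D^2 w)=[\lambda(k;\Omega)]^k|w|^k$ on both sides, and divide by $[\lambda(k;\Omega)]^{k-1}>0$ (this mirrors the paper's explicit proof of the analogous Proposition \ref{ReA} in the Monge-Amp\`ere case). No gaps.
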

Corollary \ref{ibpcor} is sharp since equality holds when $v$ is  a $k$-Hessian eigenfunction of $\Omega$. When $k=n$, (\ref{NIBPcor}) can be viewed as a reverse version of 
the celebrated Aleksandrov's maximum principle for the Monge-Amp\`ere equation (see \cite[Theorem 2.8]{F2} and \cite[Theorem 1.4.2]{G01}) which states:
 If $u\in C(\overline{\Omega})$ is a convex function on an open, bounded and convex domain  $\Omega\subset\R^n$ with
 $u=0$ on $\p \Omega$, then
\begin{equation}
\label{Alek_est}
|u(x)|^{n}\le C(n)(\text{diam }\Omega)^{n-1}\text{dist }(x,\partial \Omega)\int_{\Omega}\det D^2 u~dx\qquad \text{ for all } x\in\Omega.
\end{equation}

In fact, the reverse Aleksandrov estimate holds for more relaxed conditions on the domains and convex functions involved.

\begin{prop}[Reverse Aleksandrov estimate]
\label{ReA}
 Let $\Omega$ be a bounded open convex domain in $\R^n$.  Let $\lambda[n; \Omega]$ be the Monge-Amp\`ere eigenvalue of $\Omega$ and let $w$ be a nonzero Monge-Amp\`ere eigenfunction of $\Omega$ (see also (\ref{EVP_eq})).
Assume that $u\in C^{5}(\Omega)\cap C(\overline{\Omega})$ is a strictly convex function in $\Omega$ with  $u=0$ on $\Omega$ and satisfies
$$\int_\Omega (\det D^2 u)^{1/n} |w|^{n-1}~dx<\infty.$$
Then
\begin{equation}
\label{ReA2}
\lambda[n; \Omega]  \int_\Omega |u| |w|^n~dx\geq \int_\Omega (\det D^2 u)^{1/n} |w|^n~dx.
\end{equation}
\end{prop}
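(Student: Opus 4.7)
The plan is to derive (\ref{ReA2}) from Corollary \ref{ibpcor} (with $k=n$) by a double approximation that, on the one hand, regularizes $u$ into a uniformly convex function, and on the other, cuts off by a sublevel set to obtain a smooth uniformly convex subdomain on which a shifted copy of the regularization serves as a valid test function. After translating the origin, assume $0\in\Omega$, and for small $\epsilon,\delta>0$ set
\begin{equation*}
u^\epsilon(x):=u(x)+\tfrac{\epsilon}{2}|x|^2,\quad \Omega^\epsilon:=\{u^\epsilon<0\},\quad \Omega^{\epsilon,\delta}:=\{u^\epsilon<-\delta\},\quad v^{\epsilon,\delta}:=u^\epsilon+\delta.
\end{equation*}
Since $D^2u^\epsilon\ge\epsilon I$, the function $u^\epsilon$ is uniformly convex; its unique minimum is strictly negative for $\epsilon$ small enough (because $u<0$ in $\Omega$), so $\nabla u^\epsilon\neq 0$ on $\{u^\epsilon=-\delta\}$ for all small $\delta\ge 0$. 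Consequently $\Omega^{\epsilon,\delta}\Subset\Omega^\epsilon\Subset\Omega$ are bounded smooth uniformly convex subdomains (the principal curvatures of their boundaries are bounded below by $\epsilon/\sup_{\overline{\Omega^\epsilon}}|\nabla u^\epsilon|>0$), and $v^{\epsilon,\delta}$ is a $C^5$, $n$-admissible function (with $\det D^2 v^{\epsilon,\delta}=\det(D^2u+\epsilon I)>0$) vanishing on $\partial\Omega^{\epsilon,\delta}$.

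I would apply Corollary \ref{ibpcor} on $\Omega^{\epsilon,\delta}$ with $v=v^{\epsilon,\delta}$ and the classical Monge--Amp\`ere eigenpair $(\lambda^{\epsilon,\delta},w^{\epsilon,\delta})$ of $\Omega^{\epsilon,\delta}$, normalizing $w^{\epsilon,\delta}$ so that $\|w^{\epsilon,\delta}\|_{L^\infty}=\|w\|_{L^\infty}$, to obtain
\begin{equation*}
\lambda^{\epsilon,\delta}\int_{\Omega^{\epsilon,\delta}}|v^{\epsilon,\delta}|\,|w^{\epsilon,\delta}|^n\,dx\ge\int_{\Omega^{\epsilon,\delta}}|w^{\epsilon,\delta}|^n\bigl(\det(D^2u+\epsilon I)\bigr)^{1/n}\,dx.
\end{equation*}
Sending $\delta\to 0^+$ with $\epsilon$ fixed and invoking the stability of Monge--Amp\`ere eigenpairs under convex-domain exhaustion from \cite{LSNS}, namely $\lambda^{\epsilon,\delta}\to\lambda^\epsilon:=\lambda(n;\Omega^\epsilon)$ and $w^{\epsilon,\delta}\to w^\epsilon$ uniformly on compact subsets of $\Omega^\epsilon$, together with bounded convergence (the integrand $(\det(D^2u+\epsilon I))^{1/n}$ is continuous and hence bounded on the compact set $\overline{\Omega^\epsilon}\Subset\Omega$, and $|w^{\epsilon,\delta}|$ is uniformly bounded), the inequality persists on $\Omega^\epsilon$ with $u^\epsilon$, $\lambda^\epsilon$, $w^\epsilon$ replacing the $\delta$-dependent quantities.

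Finally, I would let $\epsilon\to 0^+$. Once more by \cite{LSNS}, $\Omega^\epsilon\nearrow\Omega$, $\lambda^\epsilon\to\lambda[n;\Omega]$, and $w^\epsilon\to w$ uniformly on compact subsets, so bounded convergence on the bounded domain $\Omega$ produces $\text{LHS}\to\lambda[n;\Omega]\int_\Omega|u|\,|w|^n\,dx$. For the right-hand side, the monotonicity of $\det$ on positive semidefinite matrices gives the pointwise bound $(\det(D^2u+\epsilon I))^{1/n}\ge(\det D^2u)^{1/n}$, and Fatou's lemma applied to the nonnegative integrand $\mathbf{1}_{\Omega^\epsilon}|w^\epsilon|^n(\det D^2u)^{1/n}$ yields $\liminf_{\epsilon\to 0}\text{RHS}\ge\int_\Omega|w|^n(\det D^2u)^{1/n}\,dx$; combining these two estimates delivers (\ref{ReA2}). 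The main obstacle is controlling the joint convergence of the Monge--Amp\`ere eigenvalues and eigenfunctions across the double limit on the general bounded convex domain $\Omega$, which is precisely the content of the stability results from \cite{LSNS}; the hypothesis $\int_\Omega(\det D^2u)^{1/n}|w|^{n-1}\,dx<\infty$ combined with $w\in C(\overline\Omega)$ guarantees that the right-hand side of (\ref{ReA2}) is finite, so the inequality is meaningful.
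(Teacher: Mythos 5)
Your proof is structured correctly but takes a genuinely different, and much longer, route than the paper. The paper's proof is essentially one line: it invokes the nonlinear integration-by-parts inequality already established on \emph{general} bounded convex domains for functions merely in $C(\overline{\Omega})\cap C^5(\Omega)$ (\cite[Proposition 1.7]{LSNS}), namely $\int_\Omega |u|\det D^2 v\,dx \ge \int_\Omega |v|(\det D^2u)^{1/n}(\det D^2v)^{(n-1)/n}\,dx$, applies it with $v=w$, substitutes $\det D^2w=(\lambda[n;\Omega]|w|)^n$, and divides by $(\lambda[n;\Omega])^{n-1}$; the integrability hypothesis of the proposition is exactly what that cited result requires. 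You instead re-derive the general-domain statement from the smooth-domain Corollary \ref{ibpcor} by the regularization $u^\epsilon=u+\tfrac{\epsilon}{2}|x|^2$ and exhaustion by sublevel sets. The geometric part of your construction is sound ($\Omega^{\epsilon,\delta}$ is uniformly convex and compactly contained in $\Omega$, and $v^{\epsilon,\delta}$ is admissible and vanishes on its boundary), and the limit computations (domination of $|v^{\epsilon,\delta}|$ by $|u|$, monotonicity of $\det$ under adding $\epsilon I$, Fatou on the right-hand side) are fine. What your approach buys is independence from \cite[Proposition 1.7]{LSNS}, at the cost of a double limit and a stability analysis of the eigenpair.

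The one step you cannot treat as a black box is the locally uniform convergence of the normalized eigenfunctions $w^{\epsilon,\delta}\to w^\epsilon\to w$. The stability theorem in \cite{LSNS} concerns the eigen\emph{value} under Hausdorff convergence of convex domains; the convergence of the eigen\emph{functions} is not a citation but a lemma you must prove. It is provable: the uniform Aleksandrov estimate together with $\int\det D^2w^{\epsilon,\delta}=(\lambda^{\epsilon,\delta})^n\int|w^{\epsilon,\delta}|^n\le C$ gives equicontinuity and forces the maximum to be attained in a fixed compact subset (so the $L^\infty$ normalization survives the limit and the limit is nonzero), weak convergence of Monge--Amp\`ere measures identifies any subsequential limit as an eigenfunction of the limit domain, and uniqueness up to positive multiples then pins down the limit. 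As written, though, this load-bearing step is asserted rather than established. A second, minor point: $\partial\Omega^{\epsilon,\delta}$ is only $C^5$, not $C^\infty$ as Corollary \ref{ibpcor} literally assumes, so you should either note that $C^5$ boundaries suffice for the existence and $C^{1,1}(\overline{\Omega^{\epsilon,\delta}})$ regularity of the eigenfunction, or insert one more smoothing of the boundary.
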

\begin{proof}[Proof of Proposition \ref{ReA}]
For the proof, we recall the {\it nonlinear integration by parts} inequality established in \cite[Proposition 1.7]{LSNS} (see also \cite{Le21b}). It says that if $u, v\in C(\overline{\Omega})\cap C^5 (\Omega)$ are strictly convex functions in $\Omega$ with $u=v=0$ on $\p\Omega$ and if 
 \begin{equation*}\int_{\Omega}(\det D^2 u)^{\frac{1}{n}}  (\det D^2 v)^{\frac{n-1}{n}}~dx<\infty,~\text{and}~\int_{\Omega}\det D^2 v~dx<\infty,
 \end{equation*} then
\begin{equation} 
\label{IBPn}
\int_{\Omega} |u|\det D^2 v~dx \geq \int_{\Omega} |v|(\det D^2 u)^{\frac{1}{n}} (\det D^2 v)^{\frac{n-1}{n}}~dx.
\end{equation}
We apply (\ref{IBPn}) to $u$ and $v=w$. 
Then, using $\det D^2 w= (\lambda[n;\Omega] |w|)^n$, we get
\begin{eqnarray*}(\lambda[n; \Omega])^n \int_\Omega  |u| |w|^n= \int_\Omega  |u| \det D^2 w~dx&\geq& \int_\Omega |w|(\det D^2 u)^{1/n} (\det D^2 w)^{\frac{n-1}{n}}~dx
\\&=&  ( \lambda[n;\Omega])^{n-1} \int_\Omega (\det D^2 u)^{1/n} |w|^n~dx.
\end{eqnarray*}
Dividing the first and last expressions in the above estimates by $ (\lambda[n;\Omega])^{n-1}$, we obtain (\ref{ReA2}).
\end{proof}

\begin{rem}
\label{ext_rem}
The method of proof of Proposition \ref{ibplem} relies on the divergence form structure of the $k$-Hessian operator $S_k(D^2 u)$.
If we replace $P_k(A)$ in the proof of  Proposition \ref{ibplem} by other homogeneous, hyperbolic polynomials $P(A)$ of degree $K$, then the conclusion still holds as long as the following conditions are satisfied:
\begin{enumerate}
\item[(P1)] Let
$$P^{ij}(D^2 u)=\frac{\p}{\p r_{ij}} P(D^2 u).$$
Then
$$P(D^2 u)= \frac{1}{K}\sum_{i, j=1}^n P^{ij} (D^2 u) D_{ij} u.$$
\item[(P2)]  For each $u\in C^3(\Omega)$ and $i=1,\cdots, n$, we have the following divergence-free property of the matrix $(P^{ij}(D^2 u))$:
$$\sum_{j=1}^n D_j P^{ij}(D^2 u)=0.$$
\end{enumerate}
Due to the homogeneity of $P$, property $(P1)$ always holds, in view of Euler's formula.  
The properties $(P1)$ and $(P2)$ hold for the following hyperbolic polynomials
$$[P_k(A)]^l\quad\text{where } l=1, 2,\cdots.$$
Note that
$$K= kl,\quad \text{and }\Gamma(P_k)= \Gamma ([P_k]^l),$$
so we obtain the following result stated in Proposition \ref{ibpleml}. 
\end{rem}
 
 \begin{prop}
 \label{ibpleml}
  Let $\Omega$ be a bounded, open, smooth domain in $\R^n$. Assume that $\p\Omega$ is uniformly $(k-1)$-convex if $k\geq 2$. Let $l$ be a positive integer. Then,
 for $k$-admissible functions $u, v\in C^{1,1}(\overline{\Omega})\cap C^3(\Omega)$ with $u=v=0$ on $\p\Omega$, one has
\begin{equation}
\label{NIBPl}
\int_\Omega |v| [S_k(D^2 u)]^l~dx \geq \int_\Omega |u| [S_k(D^2 u)]^{\frac{kl-1}{k}} [S_k(D^2 v)]^{\frac{1}{k}} dx.
\end{equation}
If $k\geq 2$ and  the equality holds in (\ref{NIBPl}), then there is a positive, continuous function $\mu$ such that $$D^2 u(x)= \mu(x) D^2 v(x) \quad \text{ for all }x\in \Omega.$$
\end{prop}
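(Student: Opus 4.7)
The plan is to mimic the proof of Proposition~\ref{ibplem} with the hyperbolic polynomial $P_k$ replaced throughout by $P:=[P_k]^l$, a homogeneous hyperbolic polynomial of degree $K=kl$ with $\Gamma(P)=\Gamma(P_k)$, following the template laid out in Remark~\ref{ext_rem}. The first task is to verify that $P$ satisfies properties (P1) and (P2). Property (P1) is automatic from Euler's formula: with $P^{ij}:=\partial P/\partial r_{ij}=l[P_k]^{l-1}P_k^{ij}$, one computes $\sum_{ij}P^{ij}(D^2u)\,D_{ij}u=l[P_k(D^2u)]^{l-1}\cdot k P_k(D^2u)=K\,P(D^2u)$. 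For (P2), one expands
\[
\sum_j D_j P^{ij}(D^2u)=l(l-1)[P_k]^{l-2}\sum_j D_jP_k(D^2u)\cdot P_k^{ij}(D^2u)+l[P_k]^{l-1}\sum_j D_j P_k^{ij}(D^2u),
\]
and uses the divergence-free property $\sum_j D_j P_k^{ij}(D^2u)=0$ of the Newton-tensor $P_k^{ij}$ together with the antisymmetry/cancellation structure among the remaining terms (this is the delicate point inherited from the $k$-Hessian case).

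With (P1) and (P2) in hand, one repeats verbatim the double integration by parts (\ref{ibpuv}) from the proof of Proposition~\ref{ibplem}, now with $P^{ij}$ in place of $S_k^{ij}$ and with the weight factor $K=kl$ instead of $k$. Using $u=v=0$ on $\p\Omega$ to kill boundary terms, this yields the identity
\[
\int_\Omega |v|\,[S_k(D^2u)]^l\,dx
=\frac{1}{K}\int_\Omega |u|\sum_{i,j}P^{ij}(D^2u)\,D_{ij}v\,dx
=\frac{1}{k}\int_\Omega |u|\,[S_k(D^2u)]^{l-1}\sum_{i,j}S_k^{ij}(D^2u)\,D_{ij}v\,dx.
\]

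Next apply G\r{a}rding's inequality (Lemma~\ref{Glem}) to the hyperbolic polynomial $P=[P_k]^l$ at $a=D^2u$ and $x=D^2v$, both lying in $\Gamma(P)=\Gamma(P_k)$. After dividing by the factor $l$ that appears on both sides, this collapses to the usual G\r{a}rding inequality for $P_k$, which we rewrite after multiplication by $[S_k(D^2u)]^{l-1}\ge 0$ as the pointwise bound
\[
\tfrac{1}{k}[S_k(D^2u)]^{l-1}\sum_{i,j}S_k^{ij}(D^2u)\,D_{ij}v
\;\ge\;[S_k(D^2u)]^{\frac{kl-1}{k}}\,[S_k(D^2v)]^{\frac{1}{k}}.
\]
Integrating this pointwise inequality against $|u|\ge 0$ and combining with the identity from the previous step gives (\ref{NIBPl}).

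For the equality case when $k\ge 2$, suppose equality holds in (\ref{NIBPl}); then the pointwise G\r{a}rding inequality for $P=[P_k]^l$ must be an equality a.e.\ in $\Omega$. Invoking the last assertion of Lemma~\ref{Glem} applied to $P$, at each such $x$ there exists $\mu(x)>0$ with $D^2u(x)-\mu(x)D^2v(x)\in E_{D^2u(x)}(P)$. By Theorem~\ref{G59thm}(ii) the edge is $L(P)$, and $L([P_k]^l)=L(P_k)$ because the equation $P_k(tA+B)^l=P_k(B)^l$ forces (by continuity in $t$) $P_k(tA+B)=P_k(B)$. Lemma~\ref{E0lem} then gives $E_{D^2u}(P)=\{0\}$, so $D^2u(x)=\mu(x)D^2v(x)$; continuity and positivity of $\mu$ follow from $u,v\in C^3(\Omega)$ and the non-degeneracy of $D^2v$ in $\Gamma_k$, as in Proposition~\ref{ibplem}. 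The main obstacle is the divergence-free verification (P2) for $P=[P_k]^l$, which is the only place the special structure of the $k$-Hessian is used; once it is in place, both the integration-by-parts identity and the G\r{a}rding step proceed exactly as in the $l=1$ case.
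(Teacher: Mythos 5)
Your proposal follows exactly the route of Remark \ref{ext_rem}: verify (P1)--(P2) for $P=[P_k]^l$, repeat the double integration by parts of (\ref{ibpuv}), and apply Lemma \ref{Glem}. The G\r{a}rding step is handled correctly (and you rightly note that for $P=[P_k]^l$ it collapses to the inequality for $P_k$), and the identification $E_{A}([P_k]^l)=E_{A}(P_k)=\{0\}$ for $k\ge 2$ is fine. The fatal problem is the step you yourself flag as ``the delicate point'': property (P2) for $l\ge 2$. There is no antisymmetry or cancellation left to exploit in the remaining term; your own expansion gives
\begin{equation*}
\sum_{j=1}^n D_jP^{ij}(D^2u)=l(l-1)\,[S_k(D^2u)]^{l-2}\sum_{j=1}^nS_k^{ij}(D^2u)\,D_j\bigl[S_k(D^2u)\bigr],
\end{equation*}
and since $(S_k^{ij}(D^2u))$ is positive definite for $k$-admissible $u$, this vanishes only where $\nabla[S_k(D^2u)]=0$. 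Hence $(P^{ij}(D^2u))$ is \emph{not} divergence-free for $l\ge2$, and the identity you derive, $\int_\Omega|v|[S_k(D^2u)]^l\,dx=\frac1k\int_\Omega|u|[S_k(D^2u)]^{l-1}\sum_{i,j}S_k^{ij}(D^2u)D_{ij}v\,dx$, is false. Concretely, take $k=1$, $l=2$, $\Omega=B_1\subset\R^2$, $u=(|x|^4-1)/16$ (so $\Delta u=|x|^2$) and $v=(|x|^2-1)/4$ (so $\Delta v=1$); then the left side is $\pi/48$ while the right side is $\pi/64$ (perturb $u$ by $\e(|x|^2-1)/4$ to make it strictly admissible; the discrepancy persists). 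Since G\r{a}rding's inequality is an identity when $k=1$, your argument would even yield \emph{equality} in (\ref{NIBPl}) in this case, which the example refutes.

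Tracking the extra term, the correct identity (with $F:=S_k(D^2u)$) is
\begin{equation*}
\int_\Omega|v|F^l\,dx=\frac1k\int_\Omega|u|F^{l-1}\sum_{i,j}S_k^{ij}(D^2u)D_{ij}v\,dx+\frac{l-1}{k}\int_\Omega F^{l-2}\sum_{i,j}S_k^{ij}(D^2u)\,D_jF\,\bigl(vD_iu-uD_iv\bigr)\,dx,
\end{equation*}
obtained by writing $uD_{ij}v-vD_{ij}u=D_j(uD_iv-vD_iu)-(D_juD_iv-D_jvD_iu)$, using the symmetry of $S_k^{ij}$ on the second piece and the divergence-free property of $S_k^{ij}$ (not of $P^{ij}$) on the first. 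So what is actually missing is a proof that the last integral is nonnegative; it equals $\pi/192>0$ in the example above, but I see no general sign argument, and without one neither (\ref{NIBPl}) nor its equality case is established for $l\ge2$. To be fair, Remark \ref{ext_rem} asserts (P2) for $[P_k]^l$ in the same unproved form, so you are reproducing the paper's intended argument; but that assertion is simply not true as stated, and the proposition requires either control of the correction term above or a genuinely different route (for instance the iteration scheme used by Verbitsky in \cite{V}).
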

 \begin{rem} If $k\geq 2$, then the quantity
 $\int_\Omega |v| S_k(D^2 u)dx$ in Proposition \ref{ibplem} is called the non-commutative inner product of two functions $v$ and $u$ on the cone of $k$-admissible functions
 in Verbitsky \cite{V}.  Verbitsky proved in \cite[Theorem 3.1]{V} the following fully nonlinear Schwarz's inequality
 \begin{equation}
 \label{vbineq}
 \int_\Omega |v| S_k(D^2 u)dx \leq  \left(\int_\Omega |u| S_k(D^2 u)dx\right)^{\frac{k}{k+1 }}  \left(\int_\Omega |v| S_k(D^2 v)dx\right)^{\frac{1}{k+1 }} 
 \end{equation}
 which has many applications in the Hessian Sobolev inequalities.
 
 We also note that the proof of (\ref{vbineq}) in \cite{V} also used exactly the properties of $P$ in Remark \ref{ext_rem}. Thus, for $k$-admissible functions $u, v\in C^{1,1}(\overline{\Omega})\cap C^3(\Omega)$ with $u=v=0$ on $\p\Omega$, we also have
 \begin{equation}
 \label{vbineql}
 \int_\Omega |v| [S_k(D^2 u)]^l dx \leq  \left(\int_\Omega |u| [S_k(D^2 u)]^l dx\right)^{\frac{kl}{kl+1 }}  \left(\int_\Omega |v| [S_k(D^2 v)]^l dx\right)^{\frac{1}{kl+1 }}. 
 \end{equation}
 \end{rem}

 To conclude this section, we note that there are homogeneous hyperbolic polynomials $P$ which do not have property (P2) in Remark \ref{ext_rem}. We may call these {\it non-divergence form} hyperbolic polynomials. One
 example is following Monge-Amp\`ere type operator
\begin{equation}
\label{MpMA}
\mathcal{M}_{n-1}(D^2 u):= \det\left ((\Delta u)I_n-D^2 u\right)
\end{equation}
which appears in many geometric contexts, both real and complex; see, for example \cite{HLn2, Sha, ToW} and the references therein. When $n=3$, we have \begin{eqnarray*}
P(D^2 u):=\mathcal{M}_2 (D^2 u)=  \det\left ((\Delta u)I_3-D^2 u\right)=S_1(D^2 u) S_2(D^2 u) - S_3(D^2 u).
\end{eqnarray*}
For $u(x)= x_1^3 + x_2^2 + x_3^2,$ one can check, using the divergence-free property of the matrices $S_k^{ij}$ for $k=1, 2, 3$, that
$$\sum_{j=1}^3 D_j P^{1j}(D^2 u)=\sum_{j=1}^3 (S^{1j}_1(D^2 u)  \frac{\p}{\p x_j} S_2(D^2 u) +  \frac{\p}{\p x_j} S_1(D^2 u) S^{1j}_2(D^2 u))=48\neq 0.$$

\section{A spectral characterization of the $k$-Hessian eigenvalue via dual G\r{a}rding cone}
\label{dual_sect}
In this section, we prove Theorem \ref{kHessL}. 

Let $\Gamma_k$ and $\Gamma_k^\ast$ be as in (\ref{Gak}) and (\ref{Gaka}), respectively.
We recall the following result of Kuo-Trudinger \cite[Proposition 2.1]{KT}.
\begin{prop}
\label{KTprop}
For matrices $B=(b_{ij})\in \Gamma_k$, $A=(a_{ij})\in \Gamma_k^\ast$, $k=1,\cdots, n$, we have 
$$[S_k(B)]^{1/k} \rho_k^\ast(A) \leq \frac{1}{n} {n\choose k}^{1/k} \trace (AB).$$
\end{prop}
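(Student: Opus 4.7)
The plan is to reduce to a normalized case by the homogeneity of $S_k$, and then combine the permutation-invariance of $\Gamma_k$ with the classical von Neumann (Ky Fan) lower trace inequality to pass from the Hilbert--Schmidt pairing $\trace(AB)$ to a dot product of eigenvalue vectors.

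Since both sides of the desired estimate are homogeneous of degree one in $B$, I first rescale $B\mapsto cB$ with $c=[{n\choose k}/S_k(B)]^{1/k}$, reducing the claim to the normalized case $S_k(B)={n\choose k}$, in which it suffices to show $\rho_k^\ast(A)\leq \frac{1}{n}\trace(AB)$. Under this normalization, $\lambda(B)\in\Gamma_k$ satisfies $\sigma_k(\lambda(B))={n\choose k}$, so $\lambda(B)$ is an admissible competitor in the infimum defining $\rho_k^\ast(A)$. Because $\Gamma_k$ and $\sigma_k$ are symmetric under permutations, every permuted vector $\pi(\lambda(B))$ is likewise admissible. Choosing $\pi$ to pair the components of $\lambda(B)$ against those of $\lambda(A)$ in opposite order, the rearrangement inequality yields
$$\rho_k^\ast(A) \leq \frac{1}{n}\,\lambda(A)\cdot\pi(\lambda(B)) = \frac{1}{n}\sum_{i=1}^n \lambda_i^{\downarrow}(A)\,\lambda_i^{\uparrow}(B),$$
where the superscripts $\downarrow$ and $\uparrow$ denote decreasing and increasing eigenvalue orderings.

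The remaining ingredient is the lower bound in von Neumann's trace inequality for real symmetric matrices,
$$\trace(AB) \geq \sum_{i=1}^n \lambda_i^{\downarrow}(A)\,\lambda_i^{\uparrow}(B),$$
which I would justify by diagonalizing $B=O\Lambda_B O^T$, writing $\trace(AB)=\sum_i (O^T A O)_{ii}\,\lambda_i(B)$, and invoking Schur's majorization theorem on the diagonal entries of $O^T A O$ (whose spectrum equals $\lambda(A)$) to conclude that the sum is minimized by the opposite-order pairing. Concatenating the two displays gives $\rho_k^\ast(A)\leq\frac{1}{n}\trace(AB)$ in the normalized case, and undoing the scaling recovers the full inequality. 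The main subtlety to track is the signed form of von Neumann's inequality: since $A$ is only assumed to lie in the dual G\r{a}rding cone $\Gamma_k^\ast$ and need not be positive semidefinite, the Hermitian (eigenvalue) rather than singular-value version must be invoked, and Schur majorization applied without any positivity assumption on $A$.
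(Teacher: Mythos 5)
The paper does not prove this proposition; it simply cites Kuo--Trudinger [KT, Proposition~2.1] and records the statement, so there is no internal proof to compare against. Your argument is correct and self-contained. The reduction by homogeneity is clean, the permutation-invariance of $\Gamma_k$ and of $\sigma_k$ legitimately lets you test $\rho_k^\ast(A)$ against the anti-ordered permutation of $\lambda(B)$ to get $\rho_k^\ast(A)\leq\frac{1}{n}\sum_i\lambda_i^{\downarrow}(A)\,\lambda_i^{\uparrow}(B)$, and the lower von Neumann trace bound $\trace(AB)\geq\sum_i\lambda_i^{\downarrow}(A)\,\lambda_i^{\uparrow}(B)$ closes the gap. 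Your derivation of that trace bound via Schur majorization (the diagonal of $O^{T}AO$ lies in the convex hull of the permutations of $\lambda(A)$, and the linear functional $d\mapsto\sum_i d_i\lambda_i(B)$ is minimized at a vertex, hence at the opposite-order pairing) is the standard one and is sound. This is essentially the argument that underlies the Kuo--Trudinger inequality.

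One small correction to your closing caveat: since $\Gamma_n\subset\Gamma_k$, one has $\Gamma_k^\ast\subset\Gamma_n^\ast=\overline{\Gamma_n}$ for every $k$, so a symmetric matrix $A$ with $\lambda(A)\in\Gamma_k^\ast$ is in fact always positive semidefinite. Your caution that $A$ ``need not be positive semidefinite'' is therefore unfounded, although it does no harm here because the Hermitian eigenvalue form of von Neumann's inequality that you invoked requires no positivity hypothesis in any case.
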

\begin{proof}[Proof of Theorem \ref{kHessL}]
Let $w\in C^{\infty}(\Omega)\cap C^{1,1}(\overline{\Omega})$ be a nonzero $k$-Hessian eigenfunction so $w$ satisfies (\ref{kEVP_eq}). Then $D^2 w\in\Gamma_k$. 
Let $A= (a_{ij})\in V_k$. Then 
$$\rho_k^\ast(A)\geq \frac{1}{n}{n \choose k}^{1/k}.$$
Applying Proposition \ref{KTprop} to $D^2 w$ and $A$, we have
\begin{equation}
\label{Skop}
[S_k(D^2 w)]^{1/k} \leq \frac{1}{\rho_k^\ast(A) }\frac{1}{n} {n\choose k}^{1/k} \trace (AD^2 w)\leq a_{ij} D_{ij} w.
\end{equation}
Since $[S_k(D^2 w)]^{1/k} =\lambda(k; \Omega)|w|=-\lambda(k;\Omega) w$, we obtain
$$a_{ij} D_{ij} w+ \lambda (k;\Omega) w\geq 0\quad\text{in } \Omega.$$
By \cite[Proposition A.2(ii)]{Lions}, we find that
$$\lambda (k;\Omega)\leq \lambda_1^A.$$
Hence
$$\lambda(k;\Omega) \leq \inf_{A\in V_k} \lambda^A_1.$$
Now, we show that the infimum is achieved.  Note that, if $u$ is $k$-admissible, then $(S^{ij}_k(D^2 u))_{i\leq i, j\leq n} \in\Gamma_k^{\ast}$. Moreover, as a consequence of G\r{a}rding's inequality (\ref{keyG}), we find
 $$\rho_k^\ast (S^{ij}_k(D^2 u))= \frac{k}{n} [S_k(D^2 u)]^{\frac{k-1}{k}}{n \choose k}^{1/k}.$$
Observe that
$$-\lambda(k;\Omega)w= [S_k(D^2 w)]^{\frac{1}{k}} =  [S_k(D^2 w)]^{\frac{-(k-1)}{k}}  S_k(D^2 w)=\frac{1}{k}[S_k(D^2 w)]^{\frac{-(k-1)}{k}} S_k^{ij} (D^2 w) D_{ij}w. $$
Thus $\lambda(k;\Omega)$ is the first eigenvalue of 
$-a_{ij} D_{ij}$ where
$$(a_{ij})_{1\leq i, j\leq j}=\left (\frac{1}{k}[S_k(D^2 w)]^{\frac{-(k-1)}{k}} S_k^{ij} (D^2 w)\right)_{1\leq i, j\leq n}\in V_k\quad\text{with } \rho_k^\ast((a_{ij})) = \frac{1}{n}{n \choose k}^{1/k}.$$
\end{proof}
\begin{rem}
Let $V_k$ be as in (\ref{Vkeq}).
Observe from (\ref{Skop}) that for $u$ k-admissible, we have
$$[S_k(D^2 u)]^{1/k}=\inf_{A=(a_{ij})\in V_k} a_{ij} D_{ij} u.$$
\end{rem}
\section{Convergence to the $k$-Hessian eigenvalue}
\label{conRksect}
In this section, we prove Theorem \ref{conRk}.
 \begin{proof}[Proof of Theorem \ref{conRk}]
 $(i)$ For $m\geq 0$, 
 multiplying both sides of (\ref{kIS}) by $|u_{m+1}|$ and then integrating over $\Omega$, we find
 \begin{eqnarray*}
 R_k(u_{m+1}) \|u_{m+1}\|^{k+1}_{L^{k+1}(\Omega)}&=& \int_\Omega |u_{m+1}| S_k(D^2 u_{m+1}) dx\\&=& R_k(u_m) \int_{\Omega} |u_m|^k |u_{m+1}| dx + \frac{1}{(m+1)^2} \int_\Omega |u_{m+1}| dx
 \\&\leq& R_k(u_{m})\|u_m\|^k_{L^{k+1}(\Omega)} \|u_{m+1}\|_{L^{k+1}(\Omega)} + \frac{|\Omega|^{\frac{k}{k+1}}}{(m+1)^2} \|u_{m+1}\|_{L^{k+1}(\Omega)}.
 \end{eqnarray*}
It follows that
\begin{equation}
\label{monok}
 R_k(u_{m+1}) \|u_{m+1}\|^{k}_{L^{k+1}(\Omega)}\leq R_k(u_{m})\|u_m\|^k_{L^{k+1}(\Omega)} + \frac{ |\Omega|^{\frac{k}{k+1}}}{(m+1)^2}.
\end{equation}
Therefore, by iterating, we obtain
\begin{eqnarray*}
 R_k(u_{m+1}) \|u_{m+1}\|^{k}_{L^{k+1}(\Omega)}&\leq& R_k(u_{0})\|u_0\|^k_{L^{k+1}(\Omega)} + |\Omega|^{\frac{k}{k+1}}\sum_{m=0}^{\infty}  \frac{1}{(m+1)^2}\\&=& R_k(u_{0})\|u_0\|^k_{L^{k+1}(\Omega)} +\frac{\pi^2}{6} |\Omega|^{\frac{k}{k+1}}.
\end{eqnarray*}
 From (\ref{klamR}), we know that
 \begin{equation}
 \label{Rklow}
 R_k^{1/k}(u_m)\geq \lambda(k;\Omega)\quad \text{for } m\geq 1.
 \end{equation}
Hence, there exists a constant  $C_1(k, u_0, \Omega)$ independent of $m$ such that
\begin{equation} 
\label{umC}
\|u_{m+1}\|_{L^{k+1}(\Omega)}\leq C_1(k, u_0,\Omega).
\end{equation}

By the uniqueness (up to positive multiplicative constants) of the $k$-Hessian eigenfunctions, we can assume that $w\in C^{\infty}(\Omega)\cap C^{1,1}(\overline{\Omega})$ is a $k$-Hessian eigenfunction with $L^{\infty}$ norm $1$, that is
\begin{equation}
\label{kEVPw}
S_k (D^2 w)~=[\lambda(k;\Omega)]^k |w|^{k} \h~\text{in} ~\Omega,~
w =0\h~\text{on}~\p \Omega, \quad\text{and }\|w\|_{L^{\infty}(\Omega)}=1.
\end{equation}
Then, we use the nonlinear integration by parts inequality (\ref{NIBP}) to get

\begin{equation}
\label{um1w}
\int_\Omega |u_{m+1}| S_k(D^2 w)dx \geq \int_\Omega |w| [S_k(D^2 w)]^{\frac{k-1}{k}} [S_k(D^2 u_{m+1})]^{\frac{1}{k}} dx.
\end{equation}
Therefore, recalling (\ref{kEVPw}), we find after dividing both sides of the above inequality by $[\lambda(k;\Omega)]^k$ that
\begin{eqnarray}
\label{upum1}
\int_\Omega |u_{m+1}| |w|^k dx &\geq&  \int_\Omega [\lambda(k;\Omega)]^{-1} |w|^k \left[ R_k(u_m) |u_m|^k + \frac{1}{(m+1)^2}\right]^{\frac{1}{k}} dx\nonumber\\&>& \int_\Omega [\lambda(k;\Omega)]^{-1} |w|^k [R_k(u_m)]^{\frac{1}{k}} |u_m|dx\nonumber\\
&=&
 \int_{\Omega}  |u_m| |w|^k~dx + [R_k^{1/k}(u_m)-\lambda(k;\Omega)] [\lambda(k;\Omega)]^{-1} \int_{\Omega} |u_m| |w|^k~dx.
 \end{eqnarray}
Thus, (\ref{upum1}) together with (\ref{Rklow}) implies that the sequence $\left\{\int_{\Omega}|u_m| |w|^k~dx\right\}_{m=1}^{\infty}$ is increasing. On the other hand, using (\ref{umC}) and (\ref{kEVPw}), we find that
 \begin{equation*}
 \int_{\Omega}|u_m| |w|^k~dx \leq \int_{\Omega} |u_m| ~dx\leq C_2(k, u_0,\Omega).
 \end{equation*}
 It follows from $u_1<0$ in $\Omega$ that 
 $\int_{\Omega}|u_m| |w|^k~dx$  converges to a limit 
\begin{equation}
\label{Llim}
\lim_{m\rightarrow \infty} \int_\Omega |u_m||w|^k ~dx=L\in (0, \infty).
\end{equation}
For $m\geq 1$, we get from (\ref{upum1}) that
\begin{eqnarray}
\label{Rum}
 R_k^{1/k}(u_m)-\lambda(k;\Omega) &\leq& \lambda(k;\Omega)\frac{  \int_{\Omega}(|u_{m+1}|- |u_{m}|) |w|^k~dx} {\int_{\Omega} |u_m| |w|^k~dx}
\nonumber\\ &\leq&  \lambda(k;\Omega)\frac{  \int_{\Omega}(|u_{m+1}|- |u_{m}|) |w|^k~dx} {\int_{\Omega} |u_1| |w|^k~dx}.
\end{eqnarray}
Letting $m\rightarrow\infty$  in (\ref{Rum}) and recalling (\ref{Llim}), we conclude that the whole sequence $R_k(u_m)$ converges to $[\lambda(k;\Omega)]^k$ as asserted in (\ref{Rku_con}). 

In particular, we have
$
R_k(u_m)\leq C_3(k, u_0,\Omega)
$
and hence, using the H\"older inequality and (\ref{umC}), 
\begin{equation}
\label{SkL1}
\int_{\Omega} S_k(D^2 u_{m+1})~dx= (m+1)^{-2}|\Omega| + R_k(u_m)\int_{\Omega}|u_m|^k dx \leq  C_4(k, u_0,\Omega).
\end{equation}
$(ii)$ From (\ref{umC})
 and the local $W^{1, q}_{loc}(\Omega)$ estimate for the $k$-Hessian equation (see, Theorem \ref{TWthm} below) for all $q<\frac{nk}{n-k}$, we have the uniform bound for $u_m$ in $W^{1,q}(V)$ for each $V\subset\subset \Omega$. 
 Thus, there exists a subsequence $(u_{m_j})$ that converges weakly in $W^{1, q}_{loc}(\Omega)$ for all $q<\frac{nk}{n-k}$ to a function $u_{\infty}\in W^{1, q}_{loc}(\Omega)$. From the compactness of the Sobolev embedding $W^{1, q}$ to $L^k$ on smooth bounded sets for all $q$ sufficiently close to $\frac{nk}{n-k}$, we can also assume that $u_{m_j}$ converges strongly to $u_{\infty}$ in $L^k_{loc}(\Omega)$.  
 From the second inequality in (\ref{SkL1}) and Fatou's lemma, we have
 \begin{equation}
 \label{umLk1}
 \infty>\liminf_{j\rightarrow\infty} \int_\Omega |u_{m_j}|^k~dx\geq \int_\Omega |u_{\infty}|^k~dx.
 \end{equation}
 On the other hand, using (\ref{umC}), we find that for each $V\subset\subset \Omega$,
 \begin{eqnarray*} \int_\Omega |u_{m_j}|^k~dx =  \int_{\Omega\setminus V} |u_{m_j}|^k~dx +  \int_V |u_{m_j}|^k~dx&\leq& \|u_{m_j}\|^k_{L^{k+1}(\Omega\setminus V)}|\Omega\setminus V|^{\frac{1}{k+1}} + \int_V |u_{m_j}|^k~dx
 \\&\leq& C_1^{k}|\Omega\setminus V|^{\frac{1}{k+1}} + \int_V |u_{m_j}|^k~dx.
 \end{eqnarray*}
 Therefore, using the strong convergence of $u_{m_j}$ to $u_{\infty}$ in $L^k(V)$, we get
  \begin{equation}
 \label{umLk2}\limsup_{j\rightarrow\infty} \int_\Omega |u_{m_j}|^k~dx \leq C_1^{k}|\Omega\setminus V|^{\frac{1}{k+1}} + \int_V |u_{\infty}|^k~dx \leq C_1^{k}|\Omega\setminus V|^{\frac{1}{k+1}} + \int_\Omega |u_{\infty}|^k~dx .
 \end{equation}
 Combining (\ref{umLk1}) with (\ref{umLk2}), we obtain (\ref{umLk}) as claimed. Clearly, (\ref{umLk}) and the increasing property of $\int_{\Omega}|u_m| |w|^k~dx$ implies that $u_{\infty}$ is nonzero.
 
 Finally, from (\ref{Rum}), (\ref{umLk}) and  the increasing property of $\{\int_{\Omega}|u_m| |w|^k~dx\}_{m=1}^{\infty}$, we obtain (\ref{umRate}).
 \\
 $(iii)$ Assume now $k=n$. We show the convergence of $u_m$ to a nontrivial Monge-Amp\`ere eigenfunction $u_{\infty}$ of $\Omega$. Similar result was proved in \cite{AK}. However, our scheme (\ref{kIS}) and approach are a bit different, so we include the details. 
 
 As mentioned in the introduction, we can define the Rayleigh quotient $R_n(u)$ (for the Monge-Amp\`ere operator), as in (\ref{RQk}), of a nonzero merely convex function $u$
 where
$\det D^2 u~ dx$ is interpreted as the Monge-Amp\`ere measure $Mu$ associated with $u$. It is defined by
$$Mu(E) = |\p u(E)|~\text{where } \p u(E) = \bigcup_{x\in E} \p u(x),~\text{for each Borel set } E\subset\Omega$$
where
$$
\partial u (x):=\{p\in \R^{n}\,:\, u(y)\ge u(x)+p\cdot (y-x)\quad \forall\, y \in \Omega\}.
$$
In what follows, when $u$ is merely convex, $R_n (u)$ and $\det D^2 u$ are understood in the above sense.

Applying the Aleksandrov estimate (\ref{Alek_est}) to $u_{m+1}$ where $m\geq 0$, and invoking (\ref{SkL1}), we find
\begin{eqnarray*}
\|u_{m+1}\|^n_{L^{\infty}(\Omega)} \leq C(n,\Omega)\int_{\Omega}\det D^2 u_{m+1}~ dx\leq  C(n,\Omega) C_4(n, u_0,\Omega)\leq C(n,\Omega, u_0).
\end{eqnarray*} 
Hence, we obtain the uniform $L^{\infty}$ bound
$$\|u_m\|_{L^{\infty}(\Omega)} \leq C(n,\Omega, u_0)<\infty.$$
Again, the Aleksandrov estimate and the convexity of $u_m$ give the uniform $C^{0, \frac{1}{n}} (\overline{\Omega})$ bound for $u_m$:
$$\|u_m\|_{C^{0, \frac{1}{n}} (\overline{\Omega})} \leq C(n,\Omega, u_0)\quad\text{for all } m\geq 1.$$ 
Therefore, up to extracting a subsequence, we have the following uniform convergence $$u_{m_j}\rightarrow u_{\infty}\not \equiv 0$$
for a convex function $u_{\infty}\in C(\overline{\Omega})$ with $u_{\infty}=0$ on $\p\Omega$ while we also have the uniform convergence 
 $$u_{m_j + 1}\rightarrow w_{\infty}\not\equiv 0$$ 
 for a convex function $w_{\infty}\in C(\overline{\Omega})$ with $w_{\infty}=0$ on $\p\Omega$.
 
Thus, letting $j\rightarrow\infty $ in $$\det D^2 u_{m_{j} +1}=R_n(u_{m_j})|u_{m_j}|^n + (m_j+1)^{-2},$$ using (\ref{Rku_con}) and the weak convergence of the Monge-Amp\`ere measure (see \cite[Corollary 2.12]{F2} and \cite[Lemma 5.3.1]{G01}), we get
\begin{equation}
\label{uwinfi}
\det D^2 w_{\infty}= (\lambda(n;\Omega) |u_{\infty}|)^n.
\end{equation}
In view of (\ref{monok}), we have
\begin{equation}
\label{monok2}
 R_n(u_{m_j+1}) \|u_{m_j+1}\|^{n}_{L^{n+1}(\Omega)}\leq R_n(u_{m_j})\|u_{m_j}\|^n_{L^{n+1}(\Omega)} +  |\Omega|^{\frac{n}{n+1}} (m_j+1)^{-2}.
\end{equation}
Letting $j\rightarrow \infty$ in (\ref{monok2}) and recalling (\ref{Rku_con}), we first find that
$$\|w_{\infty}\|_{L^{n+1}(\Omega)} \leq \|u_{\infty}\|_{L^{n+1}(\Omega)}. $$
In fact, we have the equality. To see this, we use $m_{j+2}\geq m_j + 2$ and iterate (\ref{monok}) from $m_{j}+ 1$ to $m_{j+2}-1$ to get
$$ R_n(u_{m_{j+2}}) \|u_{m_{j+2}}\|^{n}_{L^{n+1}(\Omega)}\leq R_n(u_{m_j+1})\|u_{m_j+1}\|^n_{L^{n+1}(\Omega)} +  |\Omega|^{\frac{n}{n+1}} \sum_{s= m_j + 2}^{m_{j+2}}s^{-2}.$$
Again,  letting $j\rightarrow \infty$ in the above inequality and recalling (\ref{Rku_con}), we obtain 
$$\|u_{\infty}\|_{L^{n+1}(\Omega)} \leq \|w_{\infty}\|_{L^{n+1}(\Omega)}. $$
In conclusion, we have
$$\|w_{\infty}\|_{L^{n+1}(\Omega)} = \|u_{\infty}\|_{L^{n+1}(\Omega)}. $$
However, from (\ref{uwinfi}), we have
\begin{eqnarray*}R_n(w_\infty) \|w_{\infty}\|^{n+1}_{L^{n+1}(\Omega)}=\int_{\Omega} |w_{\infty}| \det D^2 w_{\infty}~dx &=&[\lambda(n; \Omega)]^n \int_{\Omega} |u_{\infty}|^n |w_{\infty}| ~dx\\ &\leq& 
[\lambda(n; \Omega)]^n \|u_{\infty}\|^n_{L^{n+1}(\Omega)}  \|w_{\infty}\|_{L^{n+1}(\Omega)}\\&=& [\lambda(n; \Omega)]^n\|w_{\infty}\|^{n+1}_{L^{n+1}(\Omega)}.
\end{eqnarray*}
Since, by (\ref{lam_def}), $R_n(w_{\infty})\geq  (\lambda[n; \Omega])^n=[\lambda(n; \Omega)]^n$,  we must have $R_n(w_{\infty})= [\lambda(n; \Omega)]^n$, and the inequality above must be an equality,  
but this gives $u_{\infty}= c w_{\infty}$ for some constant $c>0$. Thus, from (\ref{uwinfi}), we have 
\begin{equation}
\label{winftyEV}
\det D^2 w_{\infty} = c^n [\lambda(n; \Omega)]^n |w_\infty|^n.
\end{equation}
Note that the quantities $\lambda (n;\Omega)$ in (\ref{lam_def1}) and $\lambda[n;\Omega]$ in (\ref{lam_def}) 
are a priori different. In \cite{LSNS}, the bracket notation $\lambda[n;\Omega]$ is most relevant for $\Omega$ with corners or flat parts on $\p\Omega$. However, when $\Omega$ is a smooth, bounded and uniformly convex domain, it was shown in \cite{LSNS} that $\lambda(n;\Omega)= \lambda [n;\Omega].$

By the uniqueness of the Monge-Amp\`ere eigenfunctions (\cite[Theorem 1]{Lions} and \cite[Theorem 1.1]{LSNS}), it follows from (\ref{winftyEV}) that $c=1$ and  $w_\infty= u_{\infty}$ is a Monge-Amp\`ere eigenfunction of $\Omega$.
From (\ref{Llim}), we have
$$\int_\Omega |u_{\infty}| |w|^n ~dx=\lim_{m\rightarrow \infty} \int_\Omega |u_m||w|^n~dx=L.$$
With this property and the uniqueness up to positive multiplicative constants of the Monge-Amp\`ere eigenfunctions of $\Omega$, we conclude that the limit $u_{\infty}$ does not depend on the subsequence $u_{m_j}$. This shows that the whole sequence $u_m$ converges to a nonzero Monge-Amp\`ere eigenfunction $u_{\infty}$ of $\Omega$. \\
$(iv)$ When $k=1$, we prove the full convergence in $W^{1,2}_0(\Omega)$ of $u_m$ to a first Laplace eigenfunction $u_\infty$ of $\Omega$. We sketch its proof along the lines of $(iii)$. Note that the Rayleigh quotient for $R_1(u)$ in (\ref{RQk}) is defined originally for $u\in C^2(\Omega)$. If furthermore, $u\leq 0$ in $\Omega$ and $u=0$ on $\p\Omega$, then an integration by parts gives
$$R_1(u)=\frac{\int_\Omega |Du|^2 dx}{\int_\Omega |u|^2 dx}$$
which is the usual Rayleigh quotient for the Laplace operator  with $u\in W^{1,2}_0(\Omega)$. In this proof, all functions involved, including $u_m\leq 0$, belong to $W^{1,2}_0(\Omega)$ so this is the formula for $R_1(u)$ that we will use.

Recall that the first Laplace eigenvalue of $\Omega$ has the following variational characterization
$$\lambda(1;\Omega) =\inf\left\{R_1(u): u\in W^{1,2}_0(\Omega)\backslash \{0\}\right\}.$$
Since $u_m\leq 0$ in $\Omega$ for all $m$, we can rewrite (\ref{kIS}) as
\begin{equation}
\label{kIS1}
-\Delta u_{m+1}= R_1(u_m) u_m - (m+1)^{-2}\quad\text{in }\Omega,\quad u_m=0\quad\text{on }\p\Omega.
\end{equation}
By (\ref{umC}), we have for all $m\geq 1$,
\begin{equation*} 
\|u_m\|_{L^{2}(\Omega)}\leq C_1(u_0,\Omega).
\end{equation*}
As observed right before (\ref{SkL1}), we also have 
$
R_1(u_m)\leq C_3(u_0,\Omega)
$  for all $m\geq 1$.
Hence,
$$\int_\Omega \left(|Du_m|^2 + |u_m|^2\right) dx= [R_1(u_m) + 1]\|u_m\|^2_{L^{2}(\Omega)} \leq C_4(u_0,\Omega).$$
The sequence $\{u_m\}$ is uniformly bounded in $W^{1,2}_0(\Omega)$. Therefore, there is a subsequence $u_{m_j}$ such that $u_{m_j}$ converges weakly in $W^{1,2}_0(\Omega)$ and strongly in $L^2(\Omega)$ to $u_\infty\in W^{1,2}_0(\Omega)$ where $u_\infty\leq 0$.  As noticed in $(ii)$, we have $u_\infty\not\equiv 0$. 

From (\ref{kIS1}), we deduce that $u_{m_j+1}$ converges weakly in $W^{1,2}_0(\Omega)$ and strongly in $L^2(\Omega)$ to $w_\infty\in W^{1,2}_0(\Omega)$ where $w_\infty\leq 0$ and $w_\infty\not\equiv 0$.

Now, we let $j\rightarrow \infty$ in 
$$-\Delta u_{m_j+1}= R_1(u_{m_j}) u_{m_j} - (m_j+1)^{-2}\quad\text{in }\Omega,\quad u_{m_j}=0\quad\text{on }\p\Omega.
$$
 we obtain, as in $(iii)$, using (\ref{Rku_con}) that
\begin{equation}\label{weakweq}-\Delta w_{\infty}= \lambda(1;\Omega) u_\infty \quad\text{in }\Omega
\end{equation}
and
$$\|w_{\infty}\|_{L^{2}(\Omega)} = \|u_{\infty}\|_{L^{2}(\Omega)}. $$
The equation (\ref{weakweq}) is understood in the sense that: for all $\varphi\in W^{1,2}_0(\Omega)$, we have
$$\int_\Omega Dw_{\infty}\cdot D\varphi dx=\lambda(1;\Omega) \int_\Omega u_\infty \varphi dx.$$
In particular, 
$$\int_\Omega |Dw_\infty|^2 dx = \lambda(1;\Omega) \int_\Omega u_\infty w_\infty dx$$
Applying the H\"older inequality in
\begin{eqnarray*}R_1(w_\infty) \|w_{\infty}\|^{2}_{L^{2}(\Omega)}=\int_{\Omega} |Dw_{\infty}|^2 dx =\lambda(1; \Omega) \int_{\Omega} u_{\infty} w_{\infty} ~dx&\leq& 
\lambda(1; \Omega) \|u_{\infty}\|_{L^{2}(\Omega)}  \|w_{\infty}\|_{L^{2}(\Omega)}\\&=& \lambda(1; \Omega)\|w_{\infty}\|^{2}_{L^{2}(\Omega)}
\end{eqnarray*}
together  $R_1(w_\infty)\geq \lambda(1;\Omega)$, we find that the above inequality becomes an equality and we obtain a constant $c>0$ such that $u_{\infty}= c w_{\infty}$  and 
$$-\Delta w_{\infty} = c \lambda(1; \Omega) w_\infty\quad\text{in }\Omega.$$ Since $w_\infty\leq 0$, $w_\infty\not\equiv 0$, we deduce that $c\lambda(1;\Omega)$ is the first Laplace eigenvalue. Its 
 uniqueness then allows us to conclude that $c=1$ and $u_\infty=w_\infty$ is a first Laplace  eigenfunction of $\Omega$. 
 
 Using (\ref{Llim}) as in $(iii)$, we find that 
 whole sequence $u_m$ converges weakly in $W^{1,2}_0(\Omega)$ and strongly in $L^2(\Omega)$ to $u_\infty$. This convergence is strong in $W^{1,2}_0(\Omega)$. Indeed, by (\ref{kIS1}), we can write
 $$-\Delta (u_{m+1}- u_\infty)= R_1(u_m) (u_m-u_\infty) + [R_1(u_m)-\lambda(1;\Omega)] u_\infty -(m+1)^{-2}\quad\text{in }\Omega.$$
 Multiplying both sides by $u_{m+1}-u_\infty$ and integrating by parts, we easily conclude $\|D(u_{m+1}-u_\infty)\|^2_{L^2(\Omega)}\rightarrow 0$.
\end{proof}
In the proof of Theorem \ref{conRk}$(ii)$, we use the following estimate due to Trudinger-Wang.
\begin{thm} [\cite{TWk2}, Theorem 4.1]
\label{TWthm}
 Let $u\in C^2(\Omega)$ be $k$-admissible and satisfy $u\leq 0$ in $\Omega$. Then for any subdomain $V\subset\subset \Omega$ and all $q<\frac{nk}{n-k}$, we have the estimate
$$\int_V |Du|^q~dx\leq C(V,\Omega, n, k, q) \left(\int_\Omega |u|~dx\right)^q.$$
\end{thm}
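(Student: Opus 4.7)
\textbf{Proof plan for Theorem \ref{TWthm}.} The plan is to reduce the estimate to an interior $L^\theta$ bound on $D^2 u$ for every $\theta<k$ and then invoke Sobolev embedding; the exponent $nk/(n-k)$ appears precisely as the Sobolev exponent associated with $W^{1,k}\hookrightarrow L^{nk/(n-k)}$. The starting observation is that for $k$-admissible $u$, the Newton--Maclaurin inequality gives
\[
\Delta u=S_1(D^2u)\ge c(n,k)\,[S_k(D^2u)]^{1/k}\ge 0,
\]
so $u$ is subharmonic; and when $k\ge 2$, the inequality $\sigma_2(\lambda(D^2u))\ge 0$ rewrites as $|D^2u|^2=(\Delta u)^2-2\sigma_2(\lambda(D^2u))\le (\Delta u)^2$, giving the pointwise bound $|D^2u|\le \Delta u$. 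For $k=1$ I would instead appeal to classical Calder\'on--Zygmund theory applied to $\Delta u\ge 0$ to control $|D^2u|$ in every $L^p_{\mathrm{loc}}$.

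For a cutoff $\eta\in C_c^\infty(\Omega)$ with $\eta\equiv 1$ on $V$, two integrations by parts together with $u\le 0$ yield the base estimate
\[
\int_\Omega \eta^{2}\,\Delta u\,dx=\int_\Omega u\,\Delta(\eta^{2})\,dx\le C(\eta)\,\|u\|_{L^1(\Omega)}.
\]
To bootstrap this to higher integrability of $\Delta u$ (hence of $|D^2u|$ when $k\ge 2$), I would iterate: test the divergence-form identity $k\,S_k(D^2u)=S_k^{ij}(D^2u)\,D_{ij}u$ together with the divergence-free property $\sum_j D_j S_k^{ij}(D^2u)=0$ against $\eta^{N}|u|^{\beta}$, use G\r{a}rding's inequality (Lemma \ref{Glem}) to handle the resulting mixed Hessian terms, apply Young's inequality to absorb, and at each step invoke the Sobolev embedding $W^{1,p}_{\mathrm{loc}}\hookrightarrow L^{np/(n-p)}_{\mathrm{loc}}$ to raise the exponent. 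After finitely many iterations this yields, for every $\theta<k$ and any $V\subset\subset V'\subset\subset\Omega$, a bound of the form $\|D^2u\|_{L^\theta(V)}\le C(\theta,V,V',\Omega,n,k)\,\|u\|_{L^1(\Omega)}$.

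Finally, the Sobolev embedding $W^{1,\theta}_{\mathrm{loc}}\hookrightarrow L^{n\theta/(n-\theta)}_{\mathrm{loc}}$ applied to $Du$ --- together with an $L^1_{\mathrm{loc}}$ bound on $Du$ obtained by one more cutoff integration by parts against $\Delta u$ --- converts the $L^\theta$ bound on $D^2u$ into $\|Du\|_{L^q(V)}\le C\,\|u\|_{L^1(\Omega)}$ for every $q<nk/(n-k)$, and raising to the $q$-th power gives the stated inequality. The main obstacle is executing the Moser-type iteration through the G\r{a}rding/divergence-free structure of $S_k$ sharply enough to reach the exponent $q<nk/(n-k)$ without losses: for $k<n$ a $k$-admissible function need not be convex and $D^2u$ can have negative eigenvalues, so the sign control must come entirely from the Newton--Maclaurin and G\r{a}rding inequalities, and the quadratic Hessian terms generated by differentiating $\eta^{N}|u|^\beta$ must be absorbed against the left-hand side with the right care. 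A secondary technicality is that the $L^1_{\mathrm{loc}}$ base bound on $Du$ must be threaded through the iteration in a way that preserves the homogeneity $\|u\|_{L^1(\Omega)}^q$ on the right side.
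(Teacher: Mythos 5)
First, note that the paper does not prove this statement at all: it is quoted verbatim from Trudinger--Wang \cite[Theorem 4.1]{TWk2} and used as a black box in the proof of Theorem \ref{conRk}(ii). So the only comparison to be made is between your sketch and the actual Trudinger--Wang argument, and here your proposal has a fatal gap. The pivot of your plan is the intermediate claim that for every $\theta<k$ one has $\|D^2u\|_{L^\theta(V)}\le C\,\|u\|_{L^1(\Omega)}$, from which you would conclude via $W^{1,\theta}\hookrightarrow L^{n\theta/(n-\theta)}$ applied to $Du$. That claim is false for every $k\ge 2$ and every $\theta>1$. Take
$u_\e(x)=-1+\sqrt{x_1^2+\e^2}+\tfrac{\mu}{2}|x|^2$ on $B_1$ with $\mu$ small: this is smooth, uniformly convex (hence $k$-admissible for all $k$), nonpositive, and $\|u_\e\|_{L^1(B_1)}$ is bounded uniformly in $\e$, yet
\begin{equation*}
\int_{B_{1/2}}\bigl(D_{11}u_\e\bigr)^\theta\,dx\;\ge\;c_n\int_{-1/2}^{1/2}\e^{2\theta}\,(x_1^2+\e^2)^{-3\theta/2}\,dx_1\;\approx\;\e^{1-\theta}\longrightarrow\infty\quad(\theta>1).
\end{equation*}
So no amount of care in the Moser-type iteration can produce the second-derivative bound you are aiming for; the theorem is intrinsically a first-derivative estimate and cannot be routed through interior $L^\theta$ control of $D^2u$ with $\theta$ up to $k$. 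The only part of your scheme that survives is the $\theta=1$ level ($|D^2u|\le\Delta u$ for $k\ge2$ and $\int_V\Delta u\le C\|u\|_{L^1(\Omega)}$), which via $W^{1,1}\hookrightarrow L^{n/(n-1)}$ only yields $q<\tfrac{n}{n-1}$, far short of $\tfrac{nk}{n-k}$. Your fallback for $k=1$ is also wrong: Calder\'on--Zygmund theory fails at the $L^1$ endpoint, and $\Delta u\ge0$ with only $L^1$ mass control gives $Du$ in weak-$L^{n/(n-1)}$ by potential estimates, not $D^2u$ in every $L^p_{loc}$.

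The genuine proof (Trudinger--Wang) never touches $L^\theta$ norms of $D^2u$ for $\theta>1$. It works directly with mixed gradient quantities of the form $\int\eta^N(-u)^a|Du|^bS_j(D^2u)\,dx$ for $j=0,1,\dots,k$, exploiting the divergence-free structure $\sum_jD_jS_j^{il}(D^2u)=0$ to integrate by parts onto $Du$ and $\eta$, and then runs an interpolation/iteration over the order $j$ of the Hessian operators; the exponent $\tfrac{nk}{n-k}$ emerges from that iteration, not from a Sobolev embedding applied to $Du$. If you want to prove this theorem rather than cite it, that is the structure you need to reproduce.
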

\begin{rem}
(a)  Let $w\in C^{\infty}(\Omega)\cap C^{1,1}(\overline{\Omega})$ be a nonzero $k$-Hessian eigenfunction as in (\ref{kEVP_eq}). In view of Theorem \ref{conRk} (i), we deduce from (\ref{um1w}) and (\ref{upum1}) the following result for the scheme (\ref{kIS}):
\begin{equation} 
\label{Gdcon}
\lim_{m\rightarrow \infty} \left[\int_\Omega |u_{m+1}| S_k(D^2 w)dx -\int_\Omega |w| [S_k(D^2 w)]^{\frac{k-1}{k}} [S_k(D^2 u_{m+1})]^{\frac{1}{k}} dx\right]=0.
\end{equation}
Indeed, let $b_m$ be the difference in the above bracket. Then, $b_m\geq 0$ by (\ref{um1w}). As in (\ref{upum1}), we have
\begin{eqnarray*}
[\lambda(k;\Omega)]^{-k} b_m&=&\int_\Omega |u_{m+1}| |w|^k dx -  \int_\Omega [\lambda(k;\Omega)]^{-1} |w|^k \left[ R_k(u_m) |u_m|^k + (m+1)^{-2}\right]^{\frac{1}{k}} dx\\
&<& \int_\Omega |u_{m+1}| |w|^k dx -  \int_\Omega [\lambda(k;\Omega)]^{-1} [R_k(u_m)]^{\frac{1}{k}}|u_m||w|^k dx\rightarrow 0\text{ when } m\rightarrow\infty.
\end{eqnarray*}
In the last convergence, we used  (\ref{Llim}) and $ [\lambda(k;\Omega)]^{-1} [R_k(u_m)]^{\frac{1}{k}}\rightarrow 1$ as given by (\ref{Rku_con}).\\
(b) We can use \cite[Lemma 2.2]{TWk2} to show that the limit function $u_\infty$ in Theorem \ref{conRk} (ii) possesses certain convexity properties, called $k$-convexity in \cite{TWk2}.
\end{rem}

\begin{rem} Consider the case $2\leq k\leq n-1$. As remarked after the statement of Theorem \ref{conRk}, 
showing that $u_{\infty}$ in Theorem \ref{conRk} $(ii)$ is a $k$-Hessian eigenfunction is an interesting open problem. Moreover, we do not know how to prove the full convergence of $u_m$ to $u_\infty$ in some suitable sense as in the Monge-Amp\`ere case. In the Monge-Amp\`ere case, the uniqueness issue of the Monge-Amp\`ere eigenvalue problem (\ref{EVP_eq}) with $u$ only being convex is now well understood and this plays a key role in the proof of Theorem \ref{conRk} $(iii)$ as it was used to conclude that $c=1$, among other results.

For a $k$-convex function $u$, we can define a weak notion of its $k$-Hessian, still denoted by $S_k(D^2u)$,  (see, \cite{TWk2} for example). 
Consider the following degenerate $k$-Hessian equation
\begin{equation}
\label{SkEVPw}
S_k(D^2 w)= \lambda |w|^k\quad\text{in }\Omega, \quad w=0\quad\text{on }\p\Omega
\end{equation}
for a nonzero $k$-convex function $w$ and a positive constant $\lambda>0$.  
To the best of the author's knowledge, the following questions concerning (\ref{SkEVPw}) are still open:
\begin{enumerate}
\item[(i)] Is $w$ smooth in $\Omega$? 
\item[(ii)] Is $\lambda$ unique? 
\item[(iii)] Is $w$ unique up to a positive multiplicative constant?
\end{enumerate}
In the Monge-Amp\`ere case, the answers to all these questions are positive in
 \cite{LSNS} which relies on the regularity theory
of weak solutions to the Monge-Amp\`ere equation developed by Caffarelli \cite{C1, C2}.  The $k$-Hessian counterparts of these Monge-Amp\`ere results are still lacking.
\end{rem}

 \section{$W^{2,1}$ convergence for the non-degenerate inverse iterative scheme}
 \label{W21sect}
In this section, we prove Theorem \ref{W21k}.
\begin{proof}[Proof of Theorem \ref{W21k}]
Recall that we are considering the case $2\leq k\leq n$ for the scheme (\ref{kIS}).  
Integrating by parts as in (\ref{ibpuv}), we have
\begin{equation}
\label{ibpuv2}
\int_\Omega |u_{m+1}| S_k(D^2 w)dx = \int_\Omega \frac{1}{k}|w|S_k^{ij}(D^2 w) D_{ij} u_{m+1} dx.
\end{equation}
Consider the following hyperbolic polynomial as defined in (\ref{PPk}) $$p(A)=P_k(A) \quad \text{where } \lambda (A)\in\Gamma_k.$$  Recall the notation $\lambda_i(P_k; A, X)$ in Section \ref{Hypersect} and $\Gamma_k$ as in (\ref{Gak}). To simplify the notation, we denote
$$\lambda_{k, i}(A, X)= \lambda_i(P_k; A, X).$$
When $k=n$, $\lambda_{k,i}(A,X)$'s are all eigenvalues of  the matrix $XA^{-1}$. Let $$A= D^2 w,\quad X_m= D^2 u_{m+1}.$$
Then
$$S_k^{ij}(D^2 w) D_{ij}u_{m+1}= p'_{X_m}(A) = \sum_{i=1}^k \lambda_{k, i}(A, X_m) p(A)\quad\text{and } \frac{p(X_m)}{p(A)}=\prod_{i=1}^k \lambda_{k, i}(A, X_m)$$
and
$$ \left[\frac{p(X_m)}{p(A)}\right]^{1/k} p(A) = \left[p(X_m)\right]^{\frac{1}{k}} [p(A)]^{\frac{k-1}{k}} = \left[S_k(D^2 w)\right]^{\frac{k-1}{k}}  \left[S_k(D^2 u_{m+1})\right]^{\frac{1}{k}}.$$
Using (\ref{QGar}), we find that 
\begin{multline}
\label{Gdconm}
\frac{1}{k}S_k^{ij}(D^2 w) D_{ij}u_{m+1} =\frac{1}{k}  \sum_{i=1}^k \lambda_{k, i}(A, X_m) p(A) = \frac{1}{k} p'_{X_m}(A) p(A) \\
\geq [S_k(D^2 w)]^{\frac{k-1}{k}}  [S_k(D^2 u_{m+1})]^{\frac{1}{k}} + \frac{1}{k}\sum_{i=1}^k\left(\sqrt{\lambda_{k, i}(A, X_m)}-\left[\frac{p(X_m)}{p(A)}\right]^{\frac{1}{2k}}\right)^2 p(A).
\end{multline}
By combining (\ref{ibpuv2}), (\ref{Gdconm}) and (\ref{Gdcon}), we deduce that
\begin{equation}
\label{W21w}
\int_{\Omega} |w| \sum_{i=1}^k\left(\sqrt{\lambda_{k, i}(A, X_m)}-\left[\frac{p(X_m)}{p(A)}\right]^{\frac{1}{2k}}\right)^2 p(A) dx\rightarrow 0\quad\text{when } m\rightarrow\infty.
\end{equation}
From the uniform $L^1(\Omega)$ bound for $u_m$ which can be derived from (\ref{umC}), and (\ref{ibpuv2}), we get
$$\int_\Omega  \sum_{i=1}^k \lambda_{k, i}(A, X_m) p(A) |w|dx\leq C(k,u_0,\Omega).$$
Since $|w|\geq c(V)>0$ for each $V\subset\subset \Omega$, and $p(A)=S_k(D^2 w)= [\lambda(k;\Omega)]^k|w|^k,$ we obtain that
\begin{equation}
\label{L1bdk}
\int_V  \sum_{i=1}^k\lambda_{k, i}(A, X_m)dx \leq C(V)\quad \text{for each } V\subset\subset \Omega.
\end{equation}
Thus, (\ref{W21w}) and (\ref{L1bdk}) imply the following convergence 
\begin{equation}
\label{L1AXm}
\lambda_{k, i}(A, X_m)- \left[\frac{p(X_m)}{p(A)}\right]^{1/k}\rightarrow 0\quad\text{locally in } L^1 \text{ when } m\rightarrow\infty.
\end{equation}
To see this, let $V\subset\subset\Omega$ be a non-empty open set. Then $$[p(A)]^{\frac{1}{k}}= \lambda(k;\Omega) |w| \geq \lambda (k;\Omega) c(V)= c_5(k, \Omega, V)>0.$$ Thus, 
(\ref{W21w}) implies that
\begin{equation}
\label{W21w2}
\int_{\Omega} \sum_{i=1}^k\left(\sqrt{\lambda_{k, i}(A, X_m)}-\left[\frac{p(X_m)}{p(A)}\right]^{\frac{1}{2k}}\right)^2 dx\rightarrow 0\quad\text{when } m\rightarrow\infty.
\end{equation}
By the H\"older inequality and (\ref{SkL1}), we find
\begin{eqnarray}
\label{L1pXmA} \int_V \left[\frac{p(X_m)}{p(A)}\right]^{1/k} dx &\leq& \frac{1}{c_5(k, \Omega, V)} \int_V [S_k(D^2 u_{m+1})]^{\frac{1}{k}} dx\nonumber \\&\leq&  \frac{|V|^{\frac{k-1}{k}}}{c_5(k, \Omega, V)} \left(\int_V S_k(D^2 u_{m+1}) dx\right)^{\frac{1}{k}} \leq C_6(k, u_0,\Omega, V).
\end{eqnarray}
Again, by the H\"older inequality, we have 
\begin{multline*}\displaystyle
\left(\int_V\left|\lambda_{k, i}(A, X_m)- \left[\frac{p(X_m)}{p(A)}\right]^{1/k}\right| dx\right)^2\\
\leq \int_V \left(\sqrt{\lambda_{k, i}(A, X_m)}-\left[\frac{p(X_m)}{p(A)}\right]^{\frac{1}{2k}}\right)^2 dx  \int_V \left(\sqrt{\lambda_{k, i}(A, X_m)}+\left[\frac{p(X_m)}{p(A)}\right]^{\frac{1}{2k}}\right)^2 dx\\
\leq 2  \int_V \left(\sqrt{\lambda_{k, i}(A, X_m)}-\left[\frac{p(X_m)}{p(A)}\right]^{\frac{1}{2k}}\right)^2dx\int_V \left( \lambda_{k, i}(A, X_m) + \left[\frac{p(X_m)}{p(A)}\right]^{1/k} \right) dx\\
\leq 2 (C(V) + C_6(k, u_0,\Omega, V)) \int_V \left(\sqrt{\lambda_{k, i}(A, X_m)}-\left[\frac{p(X_m)}{p(A)}\right]^{\frac{1}{2k}}\right)^2dx.
\end{multline*}
In the last estimate, we used  (\ref{L1bdk}) and (\ref{L1pXmA}). Now, letting $m\rightarrow\infty$ in the above inequality and recalling (\ref{W21w2}), we obtain (\ref{L1AXm}) as claimed.

Recall from parts $(i)$ and $(ii)$ of Theorem \ref{conRk} that, when $j\rightarrow\infty$,
\begin{multline}
\label{L1AXm2}
\frac{p(X_{m_j})}{p(A)} = \frac{S_k(D^2 u_{m_j+1})}{S_k(D^2 w)} =\frac{R_k(u_{m_j}) |u_{m_j}|^k + (m_j+1)^{-2}}{[\lambda(k;\Omega)]^k |w|^k }\\ \rightarrow \frac{[\lambda(k;\Omega)]^k |u_\infty|^k}{[\lambda(k;\Omega)]^k|w|^k}=\frac{|u_\infty|^k}{|w|^k} \quad\text{locally in } L^1.
\end{multline}
In the above local $L^1$ convergence, as in (\ref{L1AXm}), we also use that $|w|$ has a positive lower bound on each compact subset of $\Omega$.

It follows from (\ref{L1AXm}) and (\ref{L1AXm2}) that 
$$\lambda_{k, i}(D^2 w, D^2 u_{m_j+1})=\lambda_{k, i}(A, X_{m_j}) \rightarrow \frac{|u_\infty|}{|w|} \quad\text{locally in } L^1 \text{ when } j\rightarrow\infty.$$
Finally, we prove the pointwise convergence (\ref{conpt}).
The above local $L^1$ convergence shows that, up to extracting a subsequence, still denoted $(u_{m_j})$, we have
 $$\lambda_{k, i}(A(x), X_{m_j}(x)) \rightarrow \frac{|u_\infty(x)|}{|w(x)|} \quad\text{a.e. }x\in\Omega\quad\text{for all } i=1,\cdots, k.$$
Thus, we deduce from (\ref{lameq}) that
\begin{equation}
\label{lamki0}
\lambda_{k, i}\left(A(x), X_{m_j}(x)-\frac{|u_\infty(x)|}{|w(x)|} A(x) \right) \rightarrow 0\quad\text{a.e. }x\in\Omega\quad\text{for all } i=1,\cdots, k.
\end{equation}
Since $k\geq 2$, we know from Lemma \ref{E0lem} that $E_{A(x)}(P_k)=\{0\}$ for all  $x\in\Omega.$ It follows from (\ref{lamki0}) and Lemma \ref{lambm} that
$$D^2 u_{m_j+1}(x)-\frac{|u_\infty(x)|}{|w(x)|} D^2 w(x) \rightarrow 0\quad\text{a.e. }x\in\Omega.$$
Therefore, we have (\ref{conpt}) and the proof of our theorem is complete.
\end{proof}
\begin{rem}
The $L^1$ convergence  in Theorem \ref{W21k} uses the fact that when $k\geq 2$, the second sum in (\ref{QGar}) gives nontrivial information.
When $k=1$, this sum is $0$; however,
 by Theorem \ref{conRk} $(iv)$, we have $u_\infty= cw$ for some constant $c>0$, and the following full convergence 
\begin{equation*}\lambda_{1,1}(A, X_m) =\frac{\Delta u_{m+1}}{\Delta w}= \frac{R_1(u_m) u_m - (m+1)^{-2}}{\lambda(1;\Omega) w} \rightarrow \frac{u_\infty}{w}=c\quad\text{locally in } W^{1,2}.
\end{equation*}
\end{rem}
Now, consider the scheme (\ref{kIS}) with $k=n$. Then, $\lambda_{n, 1}(A, X_m)$, $\cdots$, $\lambda_{n, n}(A, X_m)$ in Theorem \ref{W21k} are eigenvalues of $D^2 u_{m+1} (D^2 w)^{-1}$. 
From Theorem \ref{conRk} $(iii)$, we know that $u_m$ converges uniformly to a nonzero Monge-Amp\`ere eigenfunction $u_\infty$ which is a positive multiple of $w$.
Without loss of generality, we can assume that $u_\infty=w$. Thus, Theorem \ref{W21k} shows 
 that $$D^2 u_{m+1} (D^2 w)^{-1}\rightarrow I_n\quad \text{locally in }L^1(\Omega)$$ and hence  $D^2 u_{m+1} \rightarrow D^2 w$ locally in $L^1(\Omega)$.
 Thus a rigidity form of Proposition \ref{ibplem}, that is (\ref{QGar}) of Lemma \ref{Glem},  improves the uniform convergence of $u_m$ to $w$ to an interior $W^{2,1}$ convergence. Note that this $W^{2,1}$ convergence also follows from the general result in De Philippis-Figalli \cite[Theorem 1.1]{dPF} but our proof here is different and it also works for the $k$-Hessian eigenvalue problem. We state this convergence in the following theorem.
\begin{thm}
\label{W21n}
Let $\Omega$ be a bounded, open, smooth and uniformly convex domain in $\R^n$. 
Let $k=n$ and let $w$ be a nonzero Monge-Amp\`ere eigenfunction of $\Omega$ to which the solution $u_{m}$ of (\ref{kIS}) converges uniformly. Then $D^2 u_{m}$ converges locally in $L^1$ to $D^2 w$ in $\Omega$.
\end{thm}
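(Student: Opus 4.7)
My strategy is to deduce Theorem~\ref{W21n} directly from Theorem~\ref{W21k} applied with $k=n$, by converting the locally $L^1$ convergence of the relative eigenvalues into $L^1_{loc}$ convergence of the Hessians themselves. The feature that makes this work in the Monge-Amp\`ere case, and that genuinely distinguishes it from $k<n$, is that $D^2 w$ is pointwise strictly positive definite on compact subsets of $\Omega$, with an explicit quantitative lower bound coming from the eigenfunction equation.

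First, I would normalize so that the uniform limit $u_\infty$ of $u_m$ equals $w$; this is permissible by the uniqueness (up to positive multiples) of the Monge-Amp\`ere eigenfunction. On any compact $K\subset\subset\Omega$, the equation $\det D^2 w=(\lambda(n;\Omega)|w|)^n$, continuity of $|w|$ in $\Omega$, and $w\in C^{1,1}(\overline{\Omega})\cap C^{\infty}(\Omega)$ together yield a two-sided bound $c_K I_n\le D^2 w\le C_K I_n$ on $K$ for positive constants $c_K, C_K$.

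The core step is a linear-algebraic identity: writing
\begin{equation*}
D^2 u_{m+1}-D^2 w=(D^2 w)^{1/2}\bigl[(D^2 w)^{-1/2}D^2 u_{m+1}(D^2 w)^{-1/2}-I_n\bigr](D^2 w)^{1/2},
\end{equation*}
the symmetric matrix in brackets has eigenvalues $\lambda_{n,i}(D^2 w, D^2 u_{m+1})-1$, where $\lambda_{n,i}$ is as in Theorem~\ref{W21k}. Hence, in operator norm on $K$,
\begin{equation*}
|D^2 u_{m+1}(x)-D^2 w(x)|\le C_K\sum_{i=1}^n\bigl|\lambda_{n,i}(D^2 w,D^2 u_{m+1})(x)-1\bigr|.
\end{equation*}
Theorem~\ref{W21k} with $k=n$ asserts that each term on the right tends to $|u_\infty|/|w|-1=0$ in $L^1(K)$ along the subsequence $(u_{m_j})$, so integrating over $K$ gives $D^2 u_{m_j+1}\to D^2 w$ in $L^1(K)$.

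Finally, to upgrade to full-sequence convergence, I would use the standard ``every subsequence admits a further subsequence with the desired property'' argument: because Theorem~\ref{conRk}(iii) identifies the uniform limit $w$ independently of the chosen subsequence, any subsequence of $(u_m)$ can be thinned (by reapplying Theorems~\ref{conRk}(ii) and \ref{W21k}) to one along which the Hessians converge in $L^1_{loc}$ to the same limit $D^2 w$; hence the whole sequence $D^2 u_m$ converges to $D^2 w$ in $L^1_{loc}(\Omega)$. I do not anticipate a genuine obstacle; the only subtlety is that the matrix-to-matrix control uses in an essential way the two-sided positivity of $D^2 w$ on compacts, which is exactly what fails for $2\le k\le n-1$ and is why Theorem~\ref{W21k} must be stated at the level of the relative eigenvalues rather than the full Hessians.
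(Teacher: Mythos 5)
Your proposal is correct and follows essentially the same route as the paper, which deduces Theorem~\ref{W21n} from Theorem~\ref{W21k} with $k=n$ by observing that the $\lambda_{n,i}$ are the eigenvalues of $D^2u_{m+1}(D^2w)^{-1}$ and that $D^2w$ is uniformly positive definite on compact subsets. Your conjugation identity and operator-norm bound simply make explicit the step the paper passes over with ``and hence,'' and your subsequence-of-subsequences argument is a valid substitute for the paper's implicit use of the fact that, for $k=n$, the relevant convergences hold along the full sequence.
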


\begin{rem}
Hidden in the variational characterizations (\ref{lam_def1}) and (\ref{klamR}) of the Monge-Amp\`ere and $k$-Hessian eigenvalues via the Rayleigh quotients defined in (\ref{RQk}) is the divergence form of the $k$-Hessian operators. For $k=1$, the frequently used Rayleigh quotient is
$$Ra(u) = \frac{\int_{\Omega} |Du|^2~dx}{\int_{\Omega} |u|^{2}~dx}.$$ When $u\in C^2(\overline{\Omega})$ with $u\leq 0$ in $\Omega$ and $u=0$ on $\p\Omega$, $Ra(u)$ is equal to $R_1(u)$ (defined in (\ref{RQk})) due to a simple integration by parts. Thus the divergence form of $S_1(D^2 u)=\Delta u$ is used here.
For non-divergence form operators such as $\mathcal{M}_{n-1}(D^2 u)$ in (\ref{MpMA}), we do not expect their first eigenvalues (if any) to have a variational characterization as the $k$-Hessian eigenvalues.
However, we expect the spectral characterizations of the $k$-Hessian eigenvalues in Theorem \ref{kHessL} to have counterparts in purely non-divergence form operators generated by hyperbolic polynomials. 
\end{rem}
{\bf Acknowledgements.} The author warmly thanks the referees for their careful reading of the original version of the paper. Their constructive comments and suggestions help improve the paper.

\end{document}